\pgfplotsset{compat=1.13}
\newcommand{\footnoteref}[1]{%
\ltx@ifpackageloaded{hyperref}{
  \ifHy@hyperfootnotes
    \hbox{\hyperref[#1]{%
            % once upon a time this was not "#1" but "\footnote@reference",
            % but somewhere something changed at some time
            \@textsuperscript {\normalfont \ref*{#1}}}}%
  \else
    \hbox{\@textsuperscript {\normalfont \ref*{#1}}}%
  \fi%
}{
    \hbox{\@textsuperscript {\normalfont \ref{#1}}}%
 }%
}
\let\originalleft\left
\let\originalright\right
\renewcommand{\left}{\mathopen{}\mathclose\bgroup\originalleft}
\renewcommand{\right}{\aftergroup\egroup\originalright}
\newcommand*\patchAmsMathEnvironmentForLineno[1]{%
  \expandafter\let\csname old#1\expandafter\endcsname\csname #1\endcsname
  \expandafter\let\csname oldend#1\expandafter\endcsname\csname end#1\endcsname
  \renewenvironment{#1}%
     {\linenomath\csname old#1\endcsname}%
     {\csname oldend#1\endcsname\endlinenomath}}%
\newcommand*\patchBothAmsMathEnvironmentsForLineno[1]{%
  \patchAmsMathEnvironmentForLineno{#1}%
  \patchAmsMathEnvironmentForLineno{#1*}}%
\providecommand{\versionortoday}{%
  \directlua{
    local f = io.popen('git describe --dirty')
    if f == nil then
    else
      tex.print('Version: ', f:read())
      f:close()
    end
  }
}
\providecommand{\versionortoday}{\today}
\newcommand\reva[1]{{#1}}
\newcommand\revb[1]{{#1}}
\newcommand\revs[1]{{#1}}
\newcommand\revo[1]{{#1}}
\newcommand{\pnt}[1] {\ensuremath{\bm{#1}}}
\renewcommand{\vec}[1] {\ensuremath{\bm{#1}}}
\newcommand{\mat}[1] {\ensuremath{\bm{#1}}}
\renewcommand{\Re}{{\mathbb{R}}}
\newcommand{\PS}{{\mathbb{K}}}
\newcommand{\refelm}[1] {\ensuremath{\hat{#1}}}
\newcommand{\der}  [2]{\frac{d#1}{d#2}}
\newcommand{\pder} [2]{\frac{\partial#1}{\partial#2}}
\newcommand{\dof}[2]{{\left[#1\right]}_{#2}}
\newcommand{\Fec}{{\vec{\mathcal{F}}}}
\newcommand{\Hec}{{\vec{\mathcal{H}}}}
\newcommand{\Dec}{{\vec{\mathcal{D}}}}
\newcommand{\Aec}{{\mat{\mathcal{A}}}}
\newcommand{\avg}[1]{\ensuremath{\left\{\!\left\{#1\right\}\!\right\}}}
\newcommand{\jmp}[1]{\ensuremath{\left[\!\left[#1\right]\!\right]}}
\newcommand{\basis}{{\reva{\upsilon}}}
\newcommand{\Basis}{{\reva{\Upsilon}}}
\newcommand{\volm}{{\;\reva{d\vec{x}}}}
\newcommand{\refvolm}{\;\reva{d\refelm{\vec{x}}}}
\newcommand{\surfm}{\;\reva{dS}}
\newcommand{\refsurfm}{\;\reva{d\refelm{S}}}
\DeclareMathOperator{\Diagonal}{\revs{diag}}
\newtheorem{theorem}{Theorem}
\newtheorem{lemma}[theorem]{Lemma}
\newdefinition{remark}{Remark}
\newdefinition{assumption}{Assumption}
\newproof{proof}{Proof}
\begin{document}

\begin{frontmatter}

\title{Entropy Stable Discontinuous Galerkin Methods for Balance Laws in
  Non-Conservative Form: Applications to \revo{the Euler Equations with Gravity}}

\author{Maciej Waruszewski\fnref{nps}}
\ead{mwarusz@igf.fuw.edu.pl}
\author{Jeremy~E. Kozdon\fnref{nps}}
\ead{jekozdon@nps.edu}
\author{Lucas~C. Wilcox\fnref{nps}}
\ead{lwilcox@nps.edu}
\author{Thomas~H. Gibson\fnref{ncsa}}
\ead{thgibso2@illinois.edu}
\author{Francis~X. Giraldo\fnref{nps}}
\ead{fxgirald@nps.edu}

\date{\versionortoday}
\fntext[nps]{
  Department of Applied Mathematics,
  Naval Postgraduate School, Monterey, CA, USA,
}
\fntext[ncsa]{
  National Center for Supercomputing Applications,
  Urbana, IL, USA,
}
  \nonumnote{The views expressed in this document are those of the
  authors and do not reflect the official policy or position of the Department
  of Defense or the U.S.  Government.\\ Approved for public release;
  distribution unlimited\\}

\begin{abstract}
  In this work a non-conservative balance law formulation is considered that
  encompasses the rotating, compressible Euler equations for dry atmospheric
  flows.
  We develop a semi-discretely entropy stable discontinuous Galerkin method on
  curvilinear meshes using a generalization of flux differencing for numerical
  fluxes in fluctuation form.
  The method uses the skew-hybridized formulation of the element operators to
  ensure that, even in the presence of under-integration on curvilinear meshes,
  the resulting discretization is entropy stable.
  Several atmospheric flow test cases in one, two, and three dimensions
  confirm the theoretical entropy stability results as well as show the
  high-order accuracy and robustness of the method.
\end{abstract}

\begin{keyword}
  Balance laws \sep{}
  Entropy stable \sep{}
  Discontinuous Galerkin \sep{}
  Geophysical flow
\end{keyword}

\end{frontmatter}

\section{Introduction}

Discontinuous Galerkin (DG) schemes for hyperbolic equations
have many favorable properties, such as high-order accuracy, compact
stencils, geometric flexibility, and the ability to add physical dissipation through
upwind-biased numerical fluxes.
DG belongs to the class of high-order element-based schemes \cite{Hesthaven2002,kopriva:2009,giraldo:2020} that
can harness the fine-grained GPU parallelism
needed for efficient exascale discretizations \cite{Abdi2019, Kolev2021, Abdelfattah2021}.
Due to their compact nature, DG schemes have demonstrated excellent
scalability \cite{Muller2019}.
Provably stable DG discretizations can be constructed for linear
systems \cite{Warburton2010, Kopriva2014}, and
often work well for nonlinear systems with smooth and reasonably well-resolved
solutions.
However, in the presence of discontinuities and under-resolved phenomena,
DG schemes often  require additional stabilization.
Classical stabilization techniques include
dealiasing through over-integration \cite{Mengaldo2015},
spectral filters \cite{Fischer2001}, limiters \cite{Qiu2005, Krivodonova2007},
and artificial viscosity \cite{Ullrich2018, Yu2020}.
While these techniques enabled DG to perform complex
simulations~\cite{Beck2014}, they are not without drawbacks such as
the possible loss of accuracy, hand-tuning of
multiple parameters, and lack of a solid theoretical foundation.

Recently, entropy-stable DG schemes
have emerged as a way to construct
high-order discretizations with rigorous nonlinear stability estimates
in a parameter-free fashion \cite{Gassner2021, Rojas2021}.
Namely, semi-discrete entropy stability estimates are guaranteed by using
specially constructed numerical fluxes~\cite{Tadmor1987} and
flux-differencing~\cite{Fisher2013}.
Importantly, the schemes are computationally affordable because they do not rely
on exact integration but instead exploit the
summation-by-parts property of polynomial differentiation operators~\cite{Gassner2013}.
Following the pioneering work in \citet{Carpenter2014, Gassner2016}
for the compressible Euler equations, the class of entropy-stable DG
schemes has been extended
to general systems of nonlinear conservation laws by \citet{Chen2017}.
\citet{Chan2018, Chan2019} generalized this to
``modal'' DG formulations which do not satisfy
summation-by-parts.
Most entropy stable DG schemes are constructed assuming exact integration in
time, and \citet{Ranocha2020} achieved fully discrete entropy-stability using
relaxation Runge--Kutta time integrators.

Entropy-stable DG schemes have been constructed for many systems
of nonlinear conservation and balance laws, such as
the shallow water equations \cite{Wintermeyer2017},
the compressible multi-component Euler equations \cite{Renac2021},
special relativistic ideal magneto-hydrodynamics \cite{Duan2020},
and the ten-moment Gaussian closure equations \cite{Biswas2021}.
However, many equations of physical interest do not conform to the
standard balance law form and require extensions of the basic
entropy-stability framework.
Entropy-stable treatment of non-conservative source terms
appearing in certain formulations of magneto-hydrodynamics
has been presented in \citet{Liu2018} for the ideal equations
and in \citet{Bohm2020} for the resistive equations.
\citet{Renac2019} constructed an entropy-stable DG spectral element scheme for
a one-dimensional, nonlinear
hyperbolic system in non-conservative form and applied it to two-phase flow.
\citet{Coquel2021} then used this framework for the Baer--Nunziato two-phase
flow model.

The motivation for this paper is the construction of entropy stable schemes for
the Euler equations with gravity. These equations are of fundamental importance
in simulations of astrophysical phenomena,
atmospheric large-eddy simulations,
numerical weather prediction, and climate modeling.
Due to their excellent properties, DG schemes are already emerging
in computational astrophysics \cite{Schaal2015, Kidder2017} as well as
in small- and large-scale atmospheric modeling
\cite{Giraldo2008, Marras2016, Tumolo2015, Baldauf2021}.
In contrast to engineering applications,
many atmospheric models formulate the equations in term\revo{s} of
an entropy-type variable known as potential temperature,
which has favorable properties for simulating atmospheric flows \cite{Smolarkiewicz2009}.
However, in climate modeling applications,
conservation of total energy and balanced energy transfers are also of great interest.
This can be achieved, for example, by using mimetic methods \cite{Taylor2020}.
An alternative, that we propose in this paper, is to guarantee conservation
of energy by using the total energy formulation, and
use the entropy-stable approach for robustness and consistency
with the second law of thermodynamics.
Crucially, this requires an extension of the existing entropy-stable framework,
since in the total energy formulation of the Euler equations
the gravity force enters as a non-conservative and spatially varying source
term.

To do this we recast the Euler equations with gravity in a more general,
non-conservative balance law formulation. We then
construct arbitrary order flux differencing DG schemes for this balance law
formulation and give conditions that the numerical fluxes must satisfy to ensure
entropy stability.
We present, in explicit form, an entropy-conserving numerical flux for the
Euler equations with gravity and show how to modify it to make it entropy-stable.
A variation on the classical one-dimensional Sod shock tube benchmark, put into a gravitational field,
is used to show the versatility of the scheme  in terms of choice of basis and quadrature rule.
Two-dimensional atmospheric test cases of buoyant convection  and gravity wave propagation
verify, respectively, the entropy-conservation and high-order accuracy of the scheme.
Finally, a baroclinic wave benchmark on the sphere
demonstrates the robustness of the scheme for long-time simulations of idealized weather-like phenomena
in a curvilinear three-dimensional geometry.

\section{Balance law formulation}
The rotating, compressible Euler equations for dry atmospheric flows in $d$
dimensions ($d=1,2,3$) are
\begin{subequations}
  \label{eq:gov:atmos}
  \begin{align}
    &\pder{\rho}{t} + \sum_{k=1}^{d} \pder{\rho u_{k}}{x_{k}} = 0,\\
    &\pder{\rho u_{i}}{t} + \sum_{k=1}^{d} \pder{\rho u_{i} u_{k}}{x_{k}}
    + \pder{p}{x_{i}}
    + \rho \pder{\phi}{x_{i}}
    =
    -2 \sum_{j,k=1}^{d} \varepsilon_{ijk} \omega_{j} u_{k},
    ~ i = 1,\dots, d,\\
    &\pder{\rho e}{t} + \sum_{k=1}^{d} \pder{u_{k} (\rho e + p)}{x_{k}} = 0.
  \end{align}
\end{subequations}
Here the prognostic variables are density $\rho$, momentum $\rho u_{i}$ for $i =
1, \dots, d$, and total energy $\rho e$. The pressure $p$ is
\begin{equation}
  p = (\gamma-1) \left(\rho e - \rho \phi - \frac{1}{2}\sum_{j=1}^{d}\rho u_{j}^{2}\right),
\end{equation}
where $\phi$ is a spatially varying geopotential function and the constant
$\gamma = \frac{c_p}{c_v}$ is the specific heat ratio given in terms of the
specific heats for constant pressure and volume, $c_p$ and $c_v$, respectively;
we take $\gamma = 7 / 5$. The components of a (potentially spatially varying) planetary
rotation vector are $\omega_{i}$ for $i = 1, \dots, d$. The Coriolis source
term is given as twice the cross product of the rotation vector with the fluid
velocity, expressed using the Levi--Civita symbol
\begin{equation}
  \label{eq:epsilon}
  \varepsilon_{ijk} =
  \begin{cases}
    +1, & \text{\ if\ } ijk \text{\ is\ } 123, 312, \text{or}\ 231,\\
    -1, & \text{\ if\ } ijk \text{\ is\ } 321, 132, \text{or}\ 213,\\
    \phantom{-}0, & \mbox{\ otherwise}.
  \end{cases}
\end{equation}
\revb{For the problem defined by equations~\eqref{eq:gov:atmos} to be
well-posed, hyperbolic, and physically meaningful, density and pressure must be positive.}

\reva{Designing an entropy stable discretization of governing equations~\eqref{eq:gov:atmos} 
is challenging because of the presence of the geopotential in the gravitational source and the pressure gradient term.
It is possible to rewrite~\eqref{eq:gov:atmos} as a pure conservation law~\cite{Chertock2018},
but it necessitates introduction of global fluxes, which are problematic for Godunov methods.
Here, we consider a generalized balance law
formulation of the form}
\begin{equation}
  \label{eq:gov}
  \pder{\vec{q}(\pnt{x}, t)}{t} +
  \sum_{i=1}^{d} \mat{A}_{i}\left(\vec{q}(\pnt{x}, t), \pnt{x}\right)
  \pder{\vec{h}_{i}\left(\vec{q}(\pnt{x}, t), \pnt{x}\right)}{x_{i}}
  =
  \vec{g}\left(\vec{q}(\pnt{x}, t), \pnt{x}\right),
\end{equation}
\reva{which will facilitate designing an entropy stable method.}
Here, $\vec{q}(\pnt{x}, t)$ is the solution vector of length $N_c$, the total
number of prognostic variables. Additionally, $\vec{h}_{i}\left(\vec{q},
\pnt{x}\right) \in \Re^{N_{d}}$ is a vector-valued function and
$\mat{A}_{i}\left(\vec{q}, \pnt{x}\right) \in \Re^{N_{c} \times N_{d}}$ is a
matrix-valued function, where in order to allow modeling flexibility $N_{d}$ is
allowed to be different from $N_{c}$; for conservation laws $N_{d} = N_{c}$ and
$\mat{A}_{i} = I$ with $\vec{h}_{i}$ being the flux function.  Balance law
formulation~\eqref{eq:gov} allows for source terms to be included in
$\mat{A}_{i}\pder{\vec{h}_{i}}{x_{i}}$ or the
vector-valued function $\vec{g}\left(\vec{q}, \pnt{x}\right) \in \Re^{N_{c}}$.

With $d = 3$ the atmospheric equations~\eqref{eq:gov:atmos} can be written in
balance law form~\eqref{eq:gov} by defining the solution vector
\begin{equation}
  \vec{q} = \begin{bmatrix}
    \rho &
    \rho u_{1} &
    \rho u_{2} &
    \rho u_{3} &
    \rho e
  \end{bmatrix}^{T}.
\end{equation}
The matrix function $\mat{A}_{k}$ and vector function $\vec{h}_{k}$ for $k = 1,
2, 3$ have $N_{d} = 6$ and are defined as
\begin{equation}
  \begin{split}
    \mat{A}_{k}(\vec{q}, \vec{x})
    =
    \begin{bmatrix}
      1 & 0 & 0 & 0 & 0 & 0\\
      0 & 1 & 0 & 0 & 0 & \delta_{1k} \rho\\
      0 & 0 & 1 & 0 & 0 & \delta_{2k} \rho\\
      0 & 0 & 0 & 1 & 0 & \delta_{3k} \rho\\
      0 & 0 & 0 & 0 & 1 & 0\\
    \end{bmatrix},
    \quad
    \vec{h}_{k}(\vec{q}, \vec{x})
    =
    \begin{bmatrix}
      \rho u_{k}\\
      \rho u_{1}u_{k} + \delta_{1k} p\\
      \rho u_{2}u_{k} + \delta_{2k} p\\
      \rho u_{3}u_{k} + \delta_{3k} p\\
      u_{k} \left(\rho e + p\right)\\
      \phi
    \end{bmatrix},
  \end{split}
\end{equation}
with $\delta_{ik}$ being the Kroneckor delta.
The source term is
\begin{equation}
  \label{eq:source}
  \vec{g}\left(\vec{q}, \pnt{x}\right) =
  \sum_{j,k=1}^{d}
  \begin{bmatrix}
    0 \\
    -2\varepsilon_{1jk}\omega_{j}u_{k} \\
    -2 \varepsilon_{2jk}\omega_{j}u_{k} \\
    -2 \varepsilon_{3jk}\omega_{j}u_{k} \\
    0
  \end{bmatrix};
\end{equation}
note that only the Coriolis term is included in the source and the geopotential
is included in the differential term.

\section{Entropy analysis}
Here we summarize continuous entropy results needed for the discrete analysis
that follows; for a more complete overview of the theory see, for example,
\citet[~Sections~1.4, 1.5, 3.1, and~3.2]{Dafermos2010}.

\subsection{Companion balance law}
We assume that balance law~\eqref{eq:gov} is endowed with a
nontrivial companion balance law
\begin{equation}
  \label{eq:companion_balance_law}
  \pder{\eta(\vec{q}(\pnt{x}, t), \pnt{x})}{t} +
  \sum_{j=1}^{d} \pder{\zeta_{j}(\vec{q}(\pnt{x}, t), \pnt{x})} {x_{j}} =
  \Pi(\vec{q}(\pnt{x}, t), \pnt{x}),
\end{equation}
where $\eta$ is a scalar convex entropy function and $\zeta_{j}$ are
associated entropy fluxes. This assumption is valid for many systems of
balance laws from continuum physics which typically have a natural companion
balance law related to the second law of thermodynamics, e.g.,
\citet[~Section~3.3]{Dafermos2010}. Specifically, companion balance
law~\eqref{eq:companion_balance_law} provides an entropy balance constraint on
the solution of balance law~\eqref{eq:gov}.  The vector of
entropy variables $\vec{\beta}$ is obtained by differentiating the entropy
function with respect to the state $\vec{q}$. That is, $\vec{\beta} =
\frac{\partial \eta}{\partial \vec{q}}$, with component $\alpha = 1, \dots,
N_{c}$ given by
\begin{equation}
  \label{eq:entropy_variables_def}
  \beta_{\alpha}(\vec{q}, \pnt{x})
  = \frac{\partial \eta(\vec{q}, \pnt{x})}{\partial q_{\alpha}}.
\end{equation}

To obtain an entropy balance condition, we first contract the
balance law~\eqref{eq:gov} with the entropy
variables~\eqref{eq:entropy_variables_def},
\begin{equation}
  \label{beta_dot_gov}
  \pder{\eta(\vec{q}, \pnt{x})}{t} +
  \sum_{i=1}^d
  \vec{\beta}^{T}(\vec{q}, \pnt{x})
  \mat{A}_{i}(\vec{q}, \pnt{x})
  \pder{\vec{h}_{i}(\vec{q}, \pnt{x})}{x_{i}}
  = \vec{\beta}^T(\vec{q}, \pnt{x}) \vec{g}(\vec{q}, \pnt{x}).
\end{equation}
Comparing~\eqref{beta_dot_gov} with the companion balance
law~\eqref{eq:companion_balance_law} gives
\begin{equation}
  \label{eq:entropy:flux}
  \pder{\zeta_{j}(\vec{q}, \pnt{x})}{q_{\alpha}}
    =
    \vec{\beta}^{T}(\vec{q}, \pnt{x})
    A_{j}(\vec{q}, \pnt{x})\pder{\vec{h}_{j}(\vec{q}, \pnt{x})}{q_{\alpha}},
\end{equation}
when the entropy production function is taken as
\begin{equation}
  \label{eq:entropy_production}
  \Pi(\vec{q}, \pnt{x}) =
  {\vec{\beta}(\vec{q}, \pnt{x})}^{T} \vec{g}(\vec{q}, \pnt{x})
  + \sum_{i = 1}^{d} \left(
    \pder{\zeta_{i}(\vec{q}, \pnt{x})}{x_{i}}
    -
    \vec{\beta}^{T}(\vec{q}, \pnt{x})
    \mat{A}_{i}(\vec{q}, \pnt{x})
    \pder{\vec{h}_{i}(\vec{q}, \pnt{x})}{x_{i}}
  \right).
\end{equation}
We will restrict ourselves to equations for which the summation term is exactly
zero so that $\Pi(\vec{q}, \pnt{x}) = {\vec{\beta}(\vec{q}, \pnt{x})}^{T}
\vec{g}(\vec{q}, \pnt{x})$.

\revb{Classical (Lipschitz continuous) solutions that satisfy the balance
law~\eqref{eq:gov} (almost everywhere) are automatically (classical)
solutions of the companion balance law~\eqref{eq:companion_balance_law}.}
Integrating~\eqref{eq:companion_balance_law} over the
domain $\Omega$ with boundary $\partial \Omega$ and outward unit normal
$\vec{n}$ gives the continuous entropy balance equality
\begin{equation}
  \label{eq:entropy:integral}
  \int_{\Omega} \pder{\eta(\vec{q}, \pnt{x})}{t} \volm
  +
  \sum_{j=1}^{d}
  \int_{\partial \Omega}
  n_{j} \zeta_{j}(\vec{q}, \pnt{x}) \surfm
  =
  \int_{\Omega} \Pi(\vec{q}, \pnt{x}) \volm.
\end{equation}
\revb{\citet[Section~1.5]{Dafermos2010} further states that one of the tenets of
the theory of balance laws is that admissible weak solutions should at least
satisfy
\begin{equation}
  \pder{\eta(\vec{q}(\pnt{x}, t), \pnt{x})}{t} +
  \sum_{j=1}^{d} \pder{\zeta_{j}(\vec{q}(\pnt{x}, t), \pnt{x})}{x_{j}}
  \le \Pi(\vec{q}(\pnt{x}, t), \pnt{x})
\label{eq:companion_balance_law_admissible}
\end{equation}
in the sense of distributions. Integrating this over the domain gives:}
\begin{equation}
  \label{eq:entropy:integral-inequality}
  \int_{\Omega} \pder{\eta(\vec{q}, \pnt{x})}{t} \volm
  +
  \sum_{j=1}^{d}
  \int_{\partial \Omega}
  n_{j} \zeta_{j}(\vec{q}, \pnt{x}) \surfm
  \leq
  \int_{\Omega} \Pi(\vec{q}, \pnt{x}) \volm.
\end{equation}
The primary objective of semi-discrete entropy analysis is to ensure
a discrete analogue of entropy balance~\eqref{eq:entropy:integral}
and dissipation~\eqref{eq:entropy:integral-inequality}.

\subsection{Entropy for the atmospheric Euler equations}
For the atmospheric Euler equations~\eqref{eq:gov:atmos}, we define the convex
mathematical entropy function
\begin{equation}
 \label{eq:entropy:variable}
  \eta =  -\frac{\rho s}{\gamma - 1},
\end{equation}
where $s = \log(p / \rho^\gamma)$ is the specific (physical) entropy.
The components of the entropy flux~\eqref{eq:entropy:flux} are taken to be
\begin{equation}
  \zeta_{j} = u_{j} \eta.
\end{equation}
The entropy variables~\eqref{eq:entropy_variables_def} are 
obtained with $\eta$ defined by~\eqref{eq:entropy:variable} resulting in
\begin{subequations}
  \label{eq:entropy_variables_euler}
  \begin{align}
    \beta_{1} &= \frac{\gamma - s}{\gamma - 1} - \left(u_{1}^2 + u_{2}^2 + u_{3}^2 - 2\phi\right) b,\\
    \beta_{2} &= 2b u_{1},\\
    \beta_{3} &= 2b u_{2},\\
    \beta_{4} &= 2b u_{\revo{3}},\\
    \beta_{5} &= -2b,
  \end{align}
\end{subequations}
with $b = \rho / 2p$ is the inverse temperature. Recognizing that
$b = -\beta_{5} / 2$, the prognostic state can be recovered from the entropy
variables using
\begin{subequations}
  \begin{align}
    u_{1} &= \frac{\beta_{2}}{2b},\\
    u_{2} &= \frac{\beta_{3}}{2b},\\
    u_{3} &= \frac{\beta_{4}}{2b},\\
    e &= \frac{1}{(\gamma-1) 2b} + \frac{u_{1}^2 + u_{2}^2 + u_{3}^2}{2} + \phi,\\
    \rho &= {
      \left(
        2b
        \exp
        \left(
          \left(\gamma - 1\right)
          \left(-\beta_{1} + \left(2\phi - \left(u_{1}^2 + u_{2}^2 +
                u_{3}^2\right)\right) b\right) +
          \gamma
        \right)
      \right)
    }^{-1/\left(\gamma - 1\right)}.
  \end{align}
\end{subequations}
\revb{When pressure and density are both positive the mapping between entropy and conserved variables is well-defined.}
\revs{
Using explicit forms of the source term~\eqref{eq:source} and
the entropy variables~\eqref{eq:entropy_variables_euler} 
the entropy production for the
atmospheric Euler equations~\eqref{eq:gov:atmos} is
 $\Pi = \vec{\beta}^T \vec{g} = -2 b \epsilon_{ijk} \omega_j u_i u_k = 0$, which is zero
due to skew symmetry of $\epsilon$~\eqref{eq:epsilon}.
This result also simply follows from the fact that the Corolis force is an inertial force.
}

\section{Flux differencing discontinuous Galerkin methods}

In this section we establish our notation and a flux differencing
formulation for balance law~\eqref{eq:gov};  our notation
is motivated by \citet{Chan2018, Chan2019, Renac2019, Castro2013}.

\subsection{Notation}
Let the domain $\Omega$ be tessellated by a mesh of disjoint elements
$K \subset \Re^d$ with boundaries $\partial K$, where each polygonal element $K$
is mapped to a
reference element $\refelm{K}$ with boundary $\partial\refelm{K}$.
Points on the physical and reference elements are denoted as $\pnt{x}$
and $\pnt{\xi}$, respectively. The outward unit normal in the physical
coordinate system is $\vec{n}$ with $\refelm{\vec{n}}$ denoting the unit normal
to $\partial\refelm{K}$.

For each element $K$, we assume there is a suitably smooth (differentiable)
transformation between the physical and reference element. For a given function
$u(\pnt{x})$ defined on a physical element $K$, we denote its mapping to the
reference element $\refelm{K}$ as $\refelm{u}(\pnt{\xi})$.
We let $J^{v}(\pnt{\xi})$ be the Jacobian determinant of the
volume transformation\revo{,} $J^{f}(\pnt{\xi})$ the Jacobian determinant of the
surface mapping\revo{, and $\refelm{J}^{f}(\pnt{\xi})$ the Jacobian determinant of the
associated reference face}. Using these Jacobian factors, integrals over
physical elements in terms of integrals on the reference element are
\begin{align}
  \int_{K} u(\pnt{x}) \volm &= \int_{\refelm{K}} J^{v}(\pnt{\xi}) \refelm{u}(\pnt{\xi}) \refvolm,\\
  \int_{\partial K} u(\pnt{x}) \surfm &= \int_{\partial \refelm{K}} J^{f}(\pnt{\xi})
  \refelm{u}(\pnt{\xi}) \refsurfm.
\end{align}
The reference and physical outward unit normals are related by
\begin{align}
  J^{f} n_{i} = \sum_{k=1}^{d} J^{v} \pder{\xi_{i}}{x_{k}} \refelm{n}_{k}
  \revo{\refelm{J}^{f}_{k}}.
\end{align}

On the reference element we assume that functions are approximated in some
polynomial space $\PS^{N} = \PS^{N}(\refelm{K})$, where $N$ is the degree of
approximation. In the numerical results we use tensor product (interval,
quadrilateral, and hexahedral) elements to represent the computational domain,
and $\PS^{N}$ is selected to be the space of $d$-dimensional tensor product
polynomials of \revb{degree $N$ in each dimension}. However, the analysis that follows is more
general and can be applied to other elements types, e.g., simplices; see
\citet{Chan2019} for a fuller discussion of other element types.
Specifically, in the definition of the operators there is no assumption that the
interpolation and quadrature points are collocated. Additionally, we do not
assume that the surface quadrature points are a subset of the volume quadrature
points. That said, when Legendre--Gauss--Lobatto points are used for
interpolation and quadrature, e.g., a spectral element method, many of the
operators simplify; see Remark \ref{remark:lgl} below.

We let ${\left\lbrace \basis_{i}\left(\pnt{\xi}\right) \right\rbrace}_{i=1}^{N_{p}}$
denote a basis for $\PS^{N}$, where $N_p$ is the dimensionality of the space.
Any function $q^{N}\left(\pnt{\xi}\right) \in \PS^{N}$ can therefore be
represented as
\begin{align}
  q^{N}\left(\pnt{\xi}\right)
  =
  \vec{\Basis}^{T}\left(\pnt{\xi}\right)
  \vec{q}^{N},
\end{align}
where $\mat{\Basis}\left(\pnt{\xi}\right)$ is a vector of basis functions
and $\vec{q}^{N}$ is a vector of expansion coefficients.

\subsection{Skew-hybridized summation-by-parts operators}

Following \citet{Chan2019}, we construct skew-hybridized operators involving
both volume and surface quadrature nodes. Here we highlight the notation needed in
this paper; further details and discussion on the operators' accuracy requirements
can be found in \citet{Chan2019}. Throughout we use the
notation $\dof{\cdot}{n}$ and $\dof{\cdot}{n,m}$ to denote the elements
of a vector and matrix, respectively.

The volume and surface quadrature rules on the reference
element $\refelm{K}$ are
$\left(\dof{\vec{\xi}^{v}}{i}, \dof{\vec{w}^{v}}{i}\right)_{i=1}^{N_{q}^{v}}$
and
$\left(\dof{\vec{\xi}^{f}}{i}, \dof{\vec{w}^{f}}{i}\right)_{i=1}^{N_{q}^{f}}$,
with $N_{q}^{v}$ and $N_{q}^{f}$ being the total number of volume and surface
quadrature points, respectively.
Here $\vec{\xi}^{v}$ and $\vec{\xi}^{f}$ denote vectors of quadrature points,
while $\vec{w}^{v}$ and $\vec{w}^{f}$ are the weights.
Diagonal matrices of the quadrature weights are
\begin{equation}
  \mat{W}^{v} = \Diagonal\left(\vec{w}^{v}\right),\quad
  \mat{W}^{f} = \Diagonal\left(\vec{w}^{f}\right).
\end{equation}
Similarly, we define diagonal matrices of the volume and surface Jacobians
evaluated at the quadrature points,
\begin{equation}
  \mat{J}^{v} = \Diagonal\left(J^{v}\left(\vec{\xi}^{v}\right)\right),\quad
  \mat{J}^{f} = \Diagonal\left(J^{f}\left(\vec{\xi}^{f}\right)\right).
\end{equation}
As discussed later, $J^{v}$ and $J^{f}$ are often not exact but approximations
in $\PS^{N}$.

Vandermonde matrices for the volume $\mat{V}^{v} \in \Re^{N_{q}^{v} \times N_{p}}$ and
surface $\mat{V}^f \in \Re^{N_{q}^{f} \times N_{p}}$ that interpolate the expansion
coefficients to the quadrature points have elements
\begin{equation}
  \dof{\mat{V}^{v}}{n,m} = \basis_m(\dof{\pnt{\xi}^{v}}{n}),
  \quad
  \dof{\mat{V}^{f}}{n,m} = \basis_m(\dof{\pnt{\xi}^{f}}{n}).
\end{equation}
The combined quadrature Vandermonde matrix
\begin{equation}
  \mat{V}
  =
  \begin{bmatrix}
    \mat{V}^{v}\\
    \mat{V}^{f}
  \end{bmatrix},
\end{equation}
interpolates from the expansion coefficients to the volume and
surface quadrature points.
With the above notation the volume mass matrix
\begin{equation}
  \mat{M}^{v} = {\left(\mat{V}^{v}\right)}^T\mat{W}^{v} \mat{J}^{v}\mat{V}^{v},
\end{equation}
is defined to approximate inner products of functions in $\PS^{N}$ on the
reference element in physical space.

In order to project functions defined on the volume quadrature points to
polynomials in $\PS^{N}$ we define a \emph{quadrature-based $L^2$ projection
operator}
\begin{equation}
  \label{eq:quad:proj}
  \mat{P}^{v} =
  {\left(\mat{M}^{v}\right)}^{-1}{\left(\mat{V}^{v}\right)}^T\mat{J}^{v}\mat{W}^{v}.
\end{equation}
We observe that going from the expansion coefficients to the quadrature points
and back is exact:
\begin{equation}\label{eq:pqvqinverse}
    \mat{P}^{v} \mat{V}^{v} = \mat{I}.
\end{equation}

The polynomial differentiation matrix with respect to the $i$-th reference
coordinate is $\mat{D}^{N}_i \in \Re^{N_p \times N_p}$.
Namely, if $\vec{q}$ are the expansion coefficients of
$q^{N} \in \PS^{N}$, then $\mat{D}_{i}^{N} \vec{q}^{N}$ are the expansion
coefficients for $\pder{q^{N}}{\xi_{i}} \in \PS^{N}$.  The
\emph{quadrature-based volume integrated differentiation operator} with respect
to the $i$-th reference coordinate is
\begin{equation}
  \mat{Q}^{v}_{i} = \mat{W}^{v} \mat{V}^{v} \mat{D}^{N}_{i} \mat{P}^{v}.
\end{equation}
With this, we follow \citet{Chan2019} and define the skew-hybridized
summation-by-parts (SBP) operator
\begin{equation}\label{eq:skewhybridSBPoperator}
  \mat{Q}_i =
    \frac{1}{2}\begin{bmatrix}
      \mat{Q}^{v}_{i} - {\left(\mat{Q}^{v}_{i}\right)}^T & {\left(\mat{E}^{v}\right)}^T\mat{B}^{f}_i \\
      -\mat{B}^{f}_i\mat{E}^{v} & \mat{B}^{f}_i
    \end{bmatrix}.
\end{equation}
Here the matrix $\mat{E}^{v} = \mat{V}^{f}\mat{P}^{v}$ projects between
the volume quadrature to the face quadrature and $\mat{B}^{f}_i = \mat{W}^f
\refelm{\mat{N}}^{f}_i \revo{\refelm{\mat{J}}^{f}}$ is the reference normal-weighted
surface quadrature with $\refelm{\mat{N}}^{f}_i$ being the diagonal matrix of
component $i$ of $\refelm{\vec{n}}$ \revo{and $\refelm{\mat{J}}^{f}$ being the
reference face Jacobian determinant, all} evaluated at the surface quadrature points.
Since $\mat{Q}_{i} \in \Re^{\left(N_{q}^{v} + N_{q}^{f}\right) \times
\left(N_{q}^{v} + N_{q}^{f}\right)}$ it operates on functions defined on
the \reva{combined volume and surface quadratures}.

The skew-hybridized SBP operator is almost skew-symmetric, as characterized by
the following lemma from \citet[Lemma 1]{Chan2019}.
\begin{lemma}[The skew-hybridized SBP operator]
  \label{lemma:properties-skewhybirdSBP}
  The skew-hybridized SBP operator
  $\mat{Q}_i$ satisfies the
  generalized summation-by-parts (SBP) property:
  \begin{equation}
    \label{eq:sbp:prop}
    \mat{Q}_i +
    \mat{Q}_i^T
    = \begin{bmatrix}
      \mat{0} \\
                & \mat{B}_i^{f}
    \end{bmatrix}.
  \end{equation}
\end{lemma}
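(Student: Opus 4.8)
The plan is to verify the identity by a direct $2\times2$ block computation, exploiting the fact that the only nontrivial ingredient, $\mat{B}^{f}_i$, is symmetric. First I would record that $\mat{B}^{f}_i = \mat{W}^f \refelm{\mat{N}}^{f}_i \refelm{\mat{J}}^{f}$ is a product of diagonal matrices, so $(\mat{B}^{f}_i)^T = \mat{B}^{f}_i$. With this in hand, transpose the block form~\eqref{eq:skewhybridSBPoperator}: the $(1,1)$ block becomes $\frac{1}{2}\left((\mat{Q}^v_i)^T - \mat{Q}^v_i\right)$, the $(1,2)$ block becomes the transpose of the original $(2,1)$ block, i.e. $\frac{1}{2}(-\mat{B}^{f}_i\mat{E}^v)^T = -\frac{1}{2}(\mat{E}^v)^T\mat{B}^{f}_i$, the $(2,1)$ block becomes $\frac{1}{2}((\mat{E}^v)^T\mat{B}^{f}_i)^T = \frac{1}{2}\mat{B}^{f}_i\mat{E}^v$, and the $(2,2)$ block becomes $\frac{1}{2}(\mat{B}^{f}_i)^T = \frac{1}{2}\mat{B}^{f}_i$.

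Adding $\mat{Q}_i$ and $\mat{Q}_i^T$ block by block then finishes the argument. The $(1,1)$ block is $\frac{1}{2}\left[\left(\mat{Q}^v_i - (\mat{Q}^v_i)^T\right) + \left((\mat{Q}^v_i)^T - \mat{Q}^v_i\right)\right] = \mat{0}$, the skew-symmetric piece cancelling against its own transpose. The two off-diagonal blocks cancel against each other by the computation above. The $(2,2)$ block is $\frac{1}{2}\mat{B}^{f}_i + \frac{1}{2}\mat{B}^{f}_i = \mat{B}^{f}_i$, which is exactly the nonzero block on the right-hand side of~\eqref{eq:sbp:prop}.

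There is essentially no obstacle: the result is built into the skew/symmetric block structure of the definition~\eqref{eq:skewhybridSBPoperator}. The only point deserving a moment's care is noting the symmetry of $\mat{B}^{f}_i$ — which is the reason the construction places the same $\mat{B}^{f}_i$ in both the $(2,1)$-transpose position and the $(2,2)$ block — and carrying the $\frac{1}{2}$ prefactor consistently through the block arithmetic. One could alternatively simply invoke \citet[Lemma~1]{Chan2019}, of which this is the statement in the present notation; I would keep the short self-contained computation for completeness.
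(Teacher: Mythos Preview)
Your proof is correct. The paper itself does not prove this lemma at all: it simply states the result and attributes it to \citet[Lemma~1]{Chan2019}, exactly the alternative you mention at the end. Your direct $2\times 2$ block computation, using that $\mat{B}^{f}_i$ is a product of diagonal matrices and hence symmetric, is the standard verification and is entirely self-contained; it adds content the paper omits rather than deviating from any argument given there.
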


The reason for $\mat{Q}_i$ being referred to as an SBP operator is
because the discrete identity
\begin{align}
  {\left(\vec{p}^{N}\right)}^{T}
  \mat{V}^{T}
  \left(
    \mat{Q}_i +
    \mat{Q}_i^{T}
  \right)
  \mat{V}
  \vec{q}^{N}
  =
  {\left(\mat{V}^{f} \vec{p}^{N}\right)}^{T}
  \mat{W}^{f}
  \refelm{\mat{N}}_i^{f}
  \mat{J}_i^{f}
  \mat{V}^{f}
  \vec{q}^{N},
\end{align}
is an approximation of the continuous \reva{multidimensional} integration-by-parts identity
\begin{align}
  \int_{\refelm{K}}
  \left(
    \pder{p}{\xi_{i}}
    q
    +
    p
    \pder{q}{\xi_{i}}
  \right)
  \refvolm
  =
  \int_{\partial\refelm{K}}
  \refelm{n}_{i}
  p
  q
  \refsurfm.
\end{align}
The advantage of using $\mat{Q}_i$ is that one is \emph{guaranteed} to satisfy
the SBP property, no matter \revb{the degree of accuracy} of the volume and surface quadrature
rules.  This allows for the construction of SBP operators in spite of aliasing
errors due to inexact integration
\citep{Gassner2013,Ranocha2018,Chan2019}.
Later we will assume consistency for the skew-hybridized
SBP operator; see \citet{Chan2019} for required accuracy conditions.
\begin{assumption}
  The skew-hybridized SBP operator is assumed to be exact for constants:
  \begin{equation}
    \label{eq:Qconst}
    \mat{Q}_{i} \vec{1} = \vec{0}.
  \end{equation}
\end{assumption}

\revb{
Diagonal matrices of the metric terms evaluated at volume and surface quadrature points are
\begin{equation}
  \mat{G}_{jk}^{v} = \Diagonal\left(J^{v}\pder{\xi_{j}}{x_{k}}\left(\vec{\xi}^{v}\right)\right),\quad
  \mat{G}_{jk}^{f} = \Diagonal\left(J^{v}\pder{\xi_{j}}{x_{k}}\left(\vec{\xi}^{f}\right)\right),
\end{equation}
respectively.  Their combined matrix is 
\begin{equation}
\mat{G}_{jk}
    = \begin{bmatrix}
      \mat{G}_{jk}^{v}  \\
                & \mat{G}_{jk}^{f} 
    \end{bmatrix}.
\end{equation}
}
As noted above, these are often approximated using the discrete operators.
\revs{One benefit of this is that the volume $\mat{G}_{jk}^{v}$ and surface
$\mat{G}_{jk}^{f}$ can be defined in a consistent manner, e.g.,
$\mat{G}_{jk}^{f} = \Diagonal\left(\mat{E}^{v}
\mat{\overline{G}}_{jk}^{v} 
\right)$ where $\mat{\overline{G}}_{jk}^{v}$ is the diagonal of $\mat{G}_{jk}^{v}$
interpreted as a vector.
}

In the discrete entropy stability analysis we will require that the metrics
discretely satisfy the geometric conservation law~\cite{Thomas1979,
Kopriva2006}
\begin{equation}
  \label{eq:gcl}
  \sum_{k=1}^{d} \mat{Q}_{k} \mat{G}_{jk} \vec{1} = \vec{0},
\end{equation}
which is a discrete statement of the continuous metric identity
\begin{equation}
  \sum_{k=1}^{d} \pder{}{\xi_{k}} \left(J \pder{\xi_{k}}{x_{j}}\right) = 0.
\end{equation}

\subsection{Flux differencing in fluctuation form}
Motivated by \citet{Castro2013} and \citet{Renac2019} we define a
numerical flux function in fluctuation form as
\begin{equation}
  \label{eq:num:fluctuation}
  \begin{split}
    \Dec_{i}\left(\vec{p}, \pnt{y}, \vec{q}, \pnt{x}\right)
      =\;&
      \Aec_{i}\left(\vec{p}, \pnt{y}, \vec{q}, \pnt{x}\right)
      \Hec_{i}\left(\vec{p}, \pnt{y}, \vec{q}, \pnt{x}\right)
      +
      \Aec_{i}\left(\vec{q}, \pnt{x}, \vec{p}, \pnt{y}\right)
      \Hec_{i}\left(\vec{q}, \pnt{x}, \vec{p}, \pnt{y}\right)\\
       &
       -
       \left(
         \Aec_{i}\left(\vec{p}, \pnt{y}, \vec{q}, \pnt{x}\right)
         +
         \Aec_{i}\left(\vec{q}, \pnt{x}, \vec{p}, \pnt{y}\right)
       \right)
       \vec{h}_{i}\left(\vec{p}, \pnt{y}\right)
       .
    \end{split}
  \end{equation}
The matrix-valued function
$\Aec_{i}\left(\vec{p}, \pnt{y}, \vec{q}, \pnt{x}\right)$
and vector-valued function
$\mat{\Hec}_{i}\left(\vec{p}, \pnt{y}, \vec{q}, \pnt{x}\right)$
are assumed to satisfy the consistency conditions
\begin{align}
  \label{eq:num:fluctuation:A}
  2\Aec_{i}\left(\vec{q}, \pnt{x}, \vec{q}, \pnt{x}\right) &= \mat{A}_{i}\left(\vec{q}, \pnt{x}\right),\\
  \label{eq:num:fluctuation:H}
  \Hec_{i}\left(\vec{q}, \pnt{x}, \vec{q}, \pnt{x}\right) &= \vec{h}_{i}\left(\vec{q}, \pnt{x}\right).
\end{align}
In the special case of balance law~\eqref{eq:gov} being a conservation
law,
i.e., $\mat{A}_{i} = I$ and $\vec{h}_{i} = \vec{f}_{i}$, we can choose
$\Aec_{i}\left(\vec{p}, \pnt{y}, \vec{q}, \pnt{x}\right) = I/2$ and let
$\Fec_{i}(\vec{p}, \pnt{y}, \vec{q}, \pnt{x})$ be a symmetric and consistent
numerical flux so that
\begin{equation}
  \label{eq:num:flux:cons}
  \Dec^{\text{cons}}_{i}\left(\vec{p}, \pnt{y}, \vec{q}, \pnt{x}\right)
  =
  \Fec_{i}\left(\vec{p}, \pnt{y}, \vec{q}, \pnt{x}\right)
  -
  \vec{f}_{i}\left(\vec{p}, \pnt{y}\right)
  .
\end{equation}

The following lemma relates the derivative of the
numerical flux function $\Dec_{i}$ to the spatial derivatives in balance
law~\eqref{eq:gov}, i.e., a flux differencing form of the derivative.
\begin{lemma}
  If numerical flux $\Dec_{i}$ defined by~\eqref{eq:num:fluctuation} satisifies
  consistency conditions~\eqref{eq:num:fluctuation:A}
  and~\eqref{eq:num:fluctuation:H} then
  \begin{equation}
    \label{eq:flux:diff}
    2\left.\pder{\Dec_{i}(\vec{q}(\pnt{y}), \pnt{y}, \vec{q}(\pnt{x}), \pnt{x})}{x_{j}}\right|_{\pnt{y}=\pnt{x}}
      =
      \mat{A}_{i}(\vec{q}(\pnt{x}), \pnt{x})
      \pder{\vec{h}_{i}(\vec{q}(\pnt{x}), \pnt{x})}{x_{j}}
    \end{equation}
  and
  \begin{equation}
    \label{eq:flux_diff_full}
    \begin{split}
      {\vec{p}(\pnt{x})}^{T} &\mat{A}_{j}(\vec{q}(\pnt{x}), \pnt{x})
      \pder{\vec{h}(\vec{q}(\pnt{x}), \pnt{x})}{x_{j}}\\
                             &=
                             {\left.
                                 \pder{
                                 }{x_{j}}
                                 \left(
                                   {\vec{p}(\pnt{y})}^{T}
                                   \Dec_{i}( \vec{q}(\pnt{y}), \pnt{y}, \vec{q}(\pnt{x}), \pnt{x})
                                   -
                                   {\vec{p}(\pnt{x})}^{T}
                                   \Dec_{i}( \vec{q}(\pnt{x}), \pnt{x}, \vec{q}(\pnt{y}), \pnt{y})
                                 \right)
                               \right|}_{\pnt{y}=\pnt{x}}.
    \end{split}
  \end{equation}
  \end{lemma}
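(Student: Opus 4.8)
The plan is to reduce both identities to two elementary consequences of the definition~\eqref{eq:num:fluctuation} together with the consistency conditions~\eqref{eq:num:fluctuation:A}--\eqref{eq:num:fluctuation:H}, and then to finish each with the product rule. First I would record: (i) $\Dec_i$ vanishes on the diagonal, $\Dec_i(\vec q, \pnt x, \vec q, \pnt x) = \vec 0$, obtained by evaluating~\eqref{eq:num:fluctuation} at $\vec p = \vec q$, $\pnt y = \pnt x$ and using $2\Aec_i(\vec q, \pnt x, \vec q, \pnt x) = \mat A_i$ and $\Hec_i(\vec q, \pnt x, \vec q, \pnt x) = \vec h_i$; and (ii) the two $\Aec_i\Hec_i$ products in~\eqref{eq:num:fluctuation} are symmetric under interchange of the two argument pairs, so they drop out of the difference, leaving
\begin{equation*}
  \Dec_i(\vec p, \pnt y, \vec q, \pnt x) - \Dec_i(\vec q, \pnt x, \vec p, \pnt y)
  = \bigl(\Aec_i(\vec p, \pnt y, \vec q, \pnt x) + \Aec_i(\vec q, \pnt x, \vec p, \pnt y)\bigr)\bigl(\vec h_i(\vec q, \pnt x) - \vec h_i(\vec p, \pnt y)\bigr).
\end{equation*}
Denoting by $\bar{\mat A}_i$ the bracketed matrix, note it is invariant under swapping the argument pairs and that $\bar{\mat A}_i(\vec q, \pnt x, \vec q, \pnt x) = \mat A_i(\vec q, \pnt x)$ by~\eqref{eq:num:fluctuation:A}.

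To prove~\eqref{eq:flux:diff} I would, with $\pnt y$ held fixed, abbreviate $D = \Dec_i(\vec q(\pnt y), \pnt y, \vec q(\pnt x), \pnt x)$ and $D' = \Dec_i(\vec q(\pnt x), \pnt x, \vec q(\pnt y), \pnt y)$, so that $2D = (D + D') + (D - D')$. The symmetric part $R := D + D'$ is invariant under $\pnt x \leftrightarrow \pnt y$ and restricts to $\vec 0$ on the diagonal by (i); differentiating the identity $R(\pnt x, \pnt x) \equiv \vec 0$ and invoking this symmetry forces $\pder{R}{x_j}\big|_{\pnt y = \pnt x} = \vec 0$. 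For the antisymmetric part, (ii) gives $D - D' = \bar{\mat A}_i\bigl(\vec h_i(\vec q(\pnt x), \pnt x) - \vec h_i(\vec q(\pnt y), \pnt y)\bigr)$; the product rule, together with the fact that the second factor vanishes at $\pnt y = \pnt x$ while $\bar{\mat A}_i \to \mat A_i$ there, yields $\pder{(D - D')}{x_j}\big|_{\pnt y = \pnt x} = \mat A_i\pder{\vec h_i}{x_j}$. Adding the two gives $2\pder{D}{x_j}\big|_{\pnt y = \pnt x} = \mat A_i\pder{\vec h_i}{x_j}$, which is~\eqref{eq:flux:diff}.

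For~\eqref{eq:flux_diff_full} I would differentiate $\vec p(\pnt y)^T D - \vec p(\pnt x)^T D'$ with respect to $x_j$ at fixed $\pnt y$ and then set $\pnt y = \pnt x$. Since $\vec p(\pnt y)^T$ is constant in $\pnt x$, the first term contributes $\vec p(\pnt x)^T\pder{D}{x_j}\big|_{\pnt y = \pnt x} = \tfrac12\,\vec p^T\mat A_i\pder{\vec h_i}{x_j}$ by the previous step. In the second term the product rule produces a piece proportional to $D'\big|_{\pnt y = \pnt x}$, which is $\vec 0$ by (i), plus $-\vec p(\pnt x)^T\pder{D'}{x_j}\big|_{\pnt y = \pnt x}$; writing $D' = R - D$ and using $\pder{R}{x_j}\big|_{\pnt y = \pnt x} = \vec 0$ from the previous paragraph, this last piece equals $+\tfrac12\,\vec p^T\mat A_i\pder{\vec h_i}{x_j}$. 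Summing the two contributions gives $\vec p^T\mat A_i\pder{\vec h_i}{x_j}$, which is~\eqref{eq:flux_diff_full} (with the flux index $i$ on $\mat A$ and $\vec h$, matching $\Dec_i$).

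I expect no genuine analytic obstacle: everything rests on the diagonal consistency $\Dec_i(\vec q, \pnt x, \vec q, \pnt x) = \vec 0$, the pair-symmetry of $R$, and one use of the product rule. The only part demanding care is the bookkeeping -- tracking which argument pair of $\Dec_i$ carries the active $\pnt x$-dependence in each term, and remembering that the $-(\Aec_i + \Aec_i)\vec h_i$ contribution in~\eqref{eq:num:fluctuation} has its $\vec h_i$ frozen at the \emph{first} argument pair -- and the $R$/$(D-D')$ splitting is precisely what organizes this cleanly.
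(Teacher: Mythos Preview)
Your proof is correct. The paper proves~\eqref{eq:flux:diff} by a direct chain of equalities: it starts from $\mat{A}_i\,\pder{\vec{h}_i}{x_j}$, rewrites this as $\pder{}{x_j}\bigl(\mat{A}_i(\pnt{x})(\vec{h}_i(\pnt{x})-\vec{h}_i(\pnt{y}))\bigr)\big|_{\pnt{y}=\pnt{x}}$, substitutes the consistency relations to introduce $\Aec_i$ and $\Hec_i$, and then splits the diagonal $\pnt{x}$-dependence across the two argument slots to recognize $2\,\pder{\Dec_i}{x_j}\big|_{\pnt{y}=\pnt{x}}$. For~\eqref{eq:flux_diff_full} it invokes~\eqref{eq:flux:diff}, replaces $\vec p(\pnt x)$ by $\vec p(\pnt y)$, subtracts the identically-zero term $\pder{}{x_j}\bigl(\vec p(\pnt x)^T\Dec_i(\pnt x,\pnt x)\bigr)$, and redistributes via the chain rule. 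Your symmetric/antisymmetric split $2D=R+(D-D')$ is a different organizing device: it dispatches $R$ by the general fact that a pair-symmetric function vanishing on the diagonal has zero partial derivative there, and handles $D-D'$ through the explicit formula $\bar{\mat A}_i\bigl(\vec h_i(\pnt x)-\vec h_i(\pnt y)\bigr)$. The underlying ingredients --- diagonal consistency $\Dec_i(\vec q,\pnt x,\vec q,\pnt x)=\vec 0$ and a single product-rule step --- are the same in both arguments, but your decomposition isolates the key intermediate fact $\pder{R}{x_j}\big|_{\pnt y=\pnt x}=\vec 0$ and then reuses it cleanly for~\eqref{eq:flux_diff_full}, whereas the paper leaves the analogous cancellation implicit in its chain-rule redistribution of the zero term.
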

\begin{proof}
  Identity~\eqref{eq:flux:diff} follows from the consistency conditions:
  \begin{equation}
    \begin{split}
      \mat{A}_{i}&(\vec{q}(\pnt{x}), \pnt{x})
      \pder{\vec{h}_{i}(\vec{q}(\pnt{x}), \pnt{x})}{x_{j}}\\
                 &=
                 \pder{\mat{A}_{i}(\vec{q}(\pnt{x}), \pnt{x}) \vec{h}_{i}(\vec{q}(\pnt{x}), \pnt{x})}{x_{j}}
                 -
                 \pder{\mat{A}_{i}(\vec{q}(\pnt{x}), \pnt{x})}{x_{j}} \vec{h}_{i}(\vec{q}(\pnt{x}), \pnt{x})\\
                 &=
                 {\left.\pder{}{x_j}
                     \left(
                       \mat{A}_{i}(\vec{q}(\pnt{x}), \pnt{x})
                       \left(\vec{h}_{i}(\vec{q}(\pnt{x}), \pnt{x})
                         -
                         \vec{h}_{i}(\vec{q}(\pnt{y}), \pnt{y})
                       \right)
                     \right)
                   \right|}_{\pnt{y} = \pnt{x}}\\
                 &=
                 {\left.2\pder{}{x_j}
                     \left(
                       \Aec_{i}(\vec{q}(\pnt{x}), \pnt{x}, \vec{q}(\pnt{x}), \pnt{x})
                       \left(\Hec_{i}(\vec{q}(\pnt{x}), \pnt{x}, \vec{q}(\pnt{x}), \pnt{x})
                         -
                         \vec{h}_{i}(\vec{q}(\pnt{y}), \pnt{y})
                       \right)
                     \right)
                   \right|}_{\pnt{y} = \pnt{x}}\\
                 &=
                 {\left.2\pder{}{x_j}
                     \left(
                       \Aec_{i}(\vec{q}(\pnt{y}), \pnt{y}, \vec{q}(\pnt{x}), \pnt{x})
                       \left(\Hec_{i}(\vec{q}(\pnt{y}), \pnt{y}, \vec{q}(\pnt{x}), \pnt{x})
                         -
                         \vec{h}_{i}(\vec{q}(\pnt{y}), \pnt{y})
                       \right)
                     \right)
                   \right|}_{\pnt{y} = \pnt{x}}\\
                 &\qquad+
                 {\left.2\pder{}{x_j}
                     \left(
                       \Aec_{i}(\vec{q}(\pnt{x}), \pnt{x}, \vec{q}(\pnt{y}), \pnt{y})
                       \left(\Hec_{i}(\vec{q}(\pnt{x}), \pnt{x}, \vec{q}(\pnt{y}), \pnt{y})
                         -
                         \vec{h}_{i}(\vec{q}(\pnt{y}), \pnt{y})
                       \right)
                     \right)
                   \right|}_{\pnt{y} = \pnt{x}}\\
                 &=
                 {\left.2\pder{}{x_j}
                     \mat{\Dec}_{i}(\vec{q}(\pnt{y}), \pnt{y}, \vec{q}(\pnt{x}), \pnt{x})
                   \right|}_{\pnt{y} = \pnt{x}}.
    \end{split}
  \end{equation}
  Consistency implies that $\Dec_{i}(\vec{q}, \pnt{x}, \vec{q}, \pnt{x}) =
  \vec{0}$, which when combined with~\eqref{eq:flux:diff}
  implies~\eqref{eq:flux_diff_full}:
  \begin{equation}
    \begin{split}
      {\vec{p}(\pnt{x})}^{T} &\mat{A}_{j}(\vec{q}(\pnt{x}), \pnt{x})
      \pder{\vec{h}(\vec{q}(\pnt{x}), \pnt{x})}{x_{j}}\\
                             &=
                             2{\vec{p}(\pnt{x})}^{T}
                             \left.
                               \pder{
                                 \Dec_{i}( \vec{q}(\pnt{y}), \pnt{y}, \vec{q}(\pnt{x}), \pnt{x})
                               }{x_{j}}\right|_{\pnt{y}=\pnt{x}}\\
                             &=
                             \left.
                               \left(
                             2{\vec{p}(\pnt{y})}^{T}
                               \pder{
                                 \Dec_{i}( \vec{q}(\pnt{y}), \pnt{y}, \vec{q}(\pnt{x}), \pnt{x})
                             }{x_{j}}\right)\right|_{\pnt{y}=\pnt{x}}
                               -
                               \pder{
                                 {\vec{p}(\pnt{x})}^{T}\Dec_{i}( \vec{q}(\pnt{x}), \pnt{x}, \vec{q}(\pnt{x}), \pnt{x})
                               }{x_{j}}\\
                             &=
                             {\left.
                                 \left(
                                   {\vec{p}(\pnt{y})}^{T}
                                   \pder{
                                     \Dec_{i}( \vec{q}(\pnt{y}), \pnt{y}, \vec{q}(\pnt{x}), \pnt{x})
                                   }{x_{j}}
                                   -
                                   \pder{
                                     {\vec{p}(\pnt{x})}^{T}
                                     \Dec_{i}( \vec{q}(\pnt{x}), \pnt{x}, \vec{q}(\pnt{y}), \pnt{y})
                                   }{x_{j}}
                                 \right)
                               \right|}_{\pnt{y}=\pnt{x}}\\
                             &=
                             {\left.
                                 \pder{
                                 }{x_{j}}
                                 \left(
                                   {\vec{p}(\pnt{y})}^{T}
                                   \Dec_{i}( \vec{q}(\pnt{y}), \pnt{y}, \vec{q}(\pnt{x}), \pnt{x})
                                   -
                                   {\vec{p}(\pnt{x})}^{T}
                                   \Dec_{i}( \vec{q}(\pnt{x}), \pnt{x}, \vec{q}(\pnt{y}), \pnt{y})
                                 \right)
                               \right|}_{\pnt{y}=\pnt{x}}.
    \end{split}
  \end{equation}
  \qed{}
\end{proof}

For conservation laws we have the following theorem showing the equivalence to
the conservation law flux differencing~\cite{Chan2018}.
\begin{theorem}
  If the numerical flux~\eqref{eq:num:flux:cons} is used then
  \begin{equation}
    \left.\pder{\Dec_{i}^{\text{cons}}(\vec{q}(\pnt{y}), \pnt{y}, \vec{q}(\pnt{x}), \pnt{x})}{x_{j}}\right|_{\pnt{y}=\pnt{x}}
      =
    \left.\pder{\Fec_{i}(\vec{q}(\pnt{y}), \pnt{y}, \vec{q}(\pnt{x}), \pnt{x})}{x_{j}}\right|_{\pnt{y}=\pnt{x}}.
  \end{equation}
\end{theorem}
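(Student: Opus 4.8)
The plan is to argue directly from the definition~\eqref{eq:num:flux:cons} of $\Dec^{\text{cons}}_{i}$, exploiting the fact that the term subtracted off there carries no dependence on the evaluation point $\pnt{x}$. First I would substitute $\vec{p} = \vec{q}(\pnt{y})$ and $\vec{q} = \vec{q}(\pnt{x})$ into~\eqref{eq:num:flux:cons} to obtain
\begin{equation}
  \Dec^{\text{cons}}_{i}(\vec{q}(\pnt{y}), \pnt{y}, \vec{q}(\pnt{x}), \pnt{x})
  =
  \Fec_{i}(\vec{q}(\pnt{y}), \pnt{y}, \vec{q}(\pnt{x}), \pnt{x})
  -
  \vec{f}_{i}(\vec{q}(\pnt{y}), \pnt{y}),
\end{equation}
viewed as a function of the two independent spatial arguments $\pnt{x}$ and $\pnt{y}$, exactly as in the flux-differencing constructions used above.

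Next I would differentiate both sides with respect to $x_{j}$, following the same convention as in the preceding lemma (hold $\pnt{y}$ fixed, differentiate in $\pnt{x}$, then set $\pnt{y} = \pnt{x}$). The key observation is that the last term, $\vec{f}_{i}(\vec{q}(\pnt{y}), \pnt{y})$, depends on $\pnt{y}$ alone — neither explicitly nor through the state does it see $\pnt{x}$ — so $\pder{\vec{f}_{i}(\vec{q}(\pnt{y}), \pnt{y})}{x_{j}} = \vec{0}$. Hence $\pder{\Dec^{\text{cons}}_{i}}{x_{j}} = \pder{\Fec_{i}}{x_{j}}$ identically in $(\pnt{x}, \pnt{y})$, and restricting to $\pnt{y} = \pnt{x}$ yields the claimed identity. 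There is no genuine obstacle here beyond recognizing that the flux-differencing derivative acts only on the second pair of arguments, so the $\pnt{y}$-only piece is inert; the theorem is essentially a consistency statement showing that the fluctuation-form framework collapses to standard conservation-law flux differencing.

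As an independent check I would note that the result also follows from the preceding lemma: taking $\mat{A}_{i} = I$ and $\vec{h}_{i} = \vec{f}_{i}$ in~\eqref{eq:flux:diff} gives $2\,\pder{\Dec^{\text{cons}}_{i}}{x_{j}}\big|_{\pnt{y}=\pnt{x}} = \pder{\vec{f}_{i}}{x_{j}}$, while symmetry and consistency of $\Fec_{i}$ imply, by writing the total derivative of $\vec{f}_{i}(\vec{q}(\pnt{x}), \pnt{x})$ as the sum of its two argument-slot contributions and using that these two contributions are equal, that $2\,\pder{\Fec_{i}}{x_{j}}\big|_{\pnt{y}=\pnt{x}} = \pder{\vec{f}_{i}}{x_{j}}$ as well; the two derivatives therefore agree.
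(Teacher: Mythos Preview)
Your first argument is correct and is exactly the immediate justification the paper has in mind: the theorem is stated in the paper without proof, precisely because once one writes $\Dec^{\text{cons}}_{i} = \Fec_{i} - \vec{f}_{i}(\vec{q}(\pnt{y}),\pnt{y})$ and observes that the subtracted term is independent of $\pnt{x}$, the identity is trivial. Your independent check via the preceding lemma and the symmetry/consistency of $\Fec_{i}$ is also valid, though more than is needed here.
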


In \citet{Renac2019} the case of $\vec{h}_{i}(\vec{q},\pnt{x}) = \vec{q}$ was
considered, and the above formulation is equivalent when
$\Hec_{i}^{\text{Renac}}(\vec{p}, \pnt{y}, \vec{q}, \pnt{x}) = \vec{q}$ in
which case the numerical flux reduces to
\begin{equation}
  \label{eq:num:flux:Renac}
  \Dec^{\text{Renac}}_{i}\left(\vec{p}, \pnt{y}, \vec{q}, \pnt{x}\right)
  =
  \Aec_{i}^{\text{Renac}}\left(\vec{p}, \pnt{y}, \vec{q}, \pnt{x}\right)
    \left(
      \vec{q}
      -
      \vec{p}
    \right)
    .
\end{equation}

\subsection{Discontinuous Galerkin methods with flux differencing in fluctuation form}

To derive a flux differencing scheme we begin with a ``strong derivative''
DG scheme for balance law~\eqref{eq:gov} for a single element $K$,
\begin{equation}
  \label{eq:init_var_form}
  \begin{split}
    \int_{K}
    &
    {\vec{p}(\pnt{x})}^{T}
    \pder{\vec{q}(\pnt{x},t)}{t}
    \volm
    +
    \sum_{j=1}^{d}\int_{K}
    {\vec{p}(\pnt{x})}^{T} \mat{A}_{j}(\vec{q}(\pnt{x}, t), \pnt{x})
    \pder{\vec{h}(\vec{q}(\pnt{x}, t), \pnt{x})}{x_{j}} \volm
    \\
    &
    =
    \int_{K}
    {\vec{p}(\pnt{x})}^{T}\vec{g}(\vec{q}(\pnt{x}, t), \pnt{x}) \volm
    -
    \sum_{j=1}^{d}
    \int_{\partial K}
    {\vec{p}(\pnt{x})}^{T}
    \Dec^{*}_{j}(\vec{q}^{-}(\pnt{x}, t), \pnt{x}^{-}, \vec{q}^{+}(\pnt{x}, t),
    \pnt{x}^{+})
    \;
    n_{j} \surfm.
  \end{split}
\end{equation}
Here $\vec{p}$ and $\vec{q}$ are the test and trial functions, and unless needed
for clarity no accent is added to denote that all functions are defined over
the element $K$.
The numerical flux on the face is $\Dec^{*}_{j}$ with the superscripts $-$ and
$+$ denoting values on the two-sides of the surface, with $-$ being the inside
value and $+$ the outside value; the unit normal $\vec{n}$ is outward to element
$K$.

Using the flux differencing identity~\eqref{eq:flux_diff_full}, the DG
scheme~\eqref{eq:init_var_form} can be rewritten as
\begin{equation}
  \label{eq:var_form}
  \begin{split}
    \int_{K}
    &
    {\vec{p}(\pnt{x})}^{T}
    \pder{\vec{q}(\pnt{x},t)}{t}
    \volm
    \\
    &
    +
    \sum_{j=1}^{d}\int_{K}
    {\left.
        \pder{
        }{x_{j}}
        \left(
          {\vec{p}(\pnt{y})}^{T}
          \Dec_{i}( \vec{q}(\pnt{y}, t), \pnt{y}, \vec{q}(\pnt{x}, t), \pnt{x})
          -
          {\vec{p}(\pnt{x})}^{T}
          \Dec_{i}( \vec{q}(\pnt{x}, t), \pnt{x}, \vec{q}(\pnt{y}, t), \pnt{y})
        \right)
      \right|}_{\pnt{y}=\pnt{x}} \volm
      \\
    &
    =
    \int_{K}
    {\vec{p}(\pnt{x})}^{T}\vec{g}(\vec{q}(\pnt{x}, t), \pnt{x}, t) \volm
    -
    \sum_{j=1}^{d}
    \int_{\partial K}
    {\vec{p}(\pnt{x})}^{T}
    \Dec^{*}_{j}(\vec{q}^{-}(\pnt{x}, t), \pnt{x}^{-}, \vec{q}^{+}(\pnt{x}, t),
    \pnt{x}^{+})
    \;
    n_{j} \surfm,
  \end{split}
\end{equation}
where the volume and surface numerical fluxes can differ.
Changing from the physical element $K$ to the reference element
$\refelm{K}$ gives the final variational form
\begin{equation}
  \label{eq:ref_var_form}
  \begin{split}
    \int_{\refelm{K}}
    &
    J^{v}(\pnt{\xi})
    {\vec{p}(\pnt{\xi})}^{T}
    \pder{\vec{q}(\pnt{\xi}, t)}{t} \refvolm
    \\
    &
    +
    \sum_{j,k=1}^{d}
    \int_{\refelm{K}}
    G_{jk}(\pnt{\xi})
    {\left.
        \pder{
        }{\xi_{k}}
        \left(
          {\vec{p}(\pnt{\eta})}^{T}
          \Dec_{i}( \vec{q}(\pnt{\eta}, t), \pnt{x}(\pnt{\eta}),
          \vec{q}(\pnt{\xi}, t), \pnt{x}(\pnt{\xi}))
        \right)
      \right|}_{\pnt{\eta}=\pnt{\xi}} \refvolm
      \\
    &
    -
    \sum_{j,k=1}^{d}
    \int_{\refelm{K}}
    G_{jk}(\pnt{\xi})
    {\left.
        \pder{
        }{\xi_{k}}
        \left(
          {\vec{p}(\pnt{\xi})}^{T}
          \Dec_{i}( \vec{q}(\pnt{\xi}, t), \pnt{x}(\pnt{\xi}),
          \vec{q}(\pnt{\eta}, t), \pnt{x}(\pnt{\eta}))
        \right)
      \right|}_{\pnt{\eta}=\pnt{\xi}} \refvolm
      \\
    =~&
    \int_{\refelm{K}}
    J^{v}(\pnt{\xi})
    {\vec{p}(\pnt{\xi})}^{T}\vec{g}(\vec{q}(\pnt{\xi}, t),
    \pnt{x}(\pnt{\xi}), t) \refvolm\\
    &
    -
    \sum_{j=1}^{d}
    \int_{\partial \refelm{K}}
    J^{f}(\pnt{\xi})
    {\vec{p}(\pnt{\xi})}^{T}
    \Dec^{*}_{j}(\vec{q}^{-}(\pnt{\xi}, t), \pnt{x}(\pnt{\xi}^{-}),
    \vec{q}^{+}(\pnt{\xi}, t), \pnt{x}(\pnt{\xi}^{+}))
    \;
    n_{j}^{-} \refsurfm,
  \end{split}
\end{equation}
with $G_{jk} = J^{v}\pder{\xi_{j}}{x_{k}}$.

\revb{A quadrature-based DG semi-discretization of~\eqref{eq:ref_var_form} for solution component
$\alpha$ can be written as}
\begin{equation}
  \label{eq:dg:mat}
  \begin{split}
    \mat{M}^{v}
    \der{\vec{q}^{N}_{\alpha}}{t}
    &+
    \mat{V}^{T}
    \sum_{j,k=1}^{d}
    \left(
      \mat{G}_{jk}
      \left(\mat{Q}_{k}\circ \Dec_{j\alpha} \right)
      -
      \left( \Dec_{j\alpha} \circ \mat{Q}_{k}^{T} \right)
      \mat{G}_{jk}
    \right)
    \vec{1}\\
    &=
    {\left(\mat{V}^{v}\right)}^{T} \mat{J}^{v} \mat{W}^{v} \vec{g}^{v}_{\alpha}
    -
    \sum_{j,k=1}^{d}
    {\left(\mat{V}^{f}\right)}^{T} \mat{G}_{jk}^{f} \mat{B}_{k}^{f} \Dec_{j\alpha}^{*}
    .
  \end{split}
\end{equation}
Here $\circ$ denotes the Hadamard (element-wise) product,
the matrix $\Dec_{j\alpha}$ denotes component $\alpha$ of the numerical
flux $\Dec_{j}$ evaluated pairwise at the quadrature nodes with components
\begin{equation}
  \label{eq:flux:matrix}
  \dof{\Dec_{j\alpha}}{n,m}
  =
  \dof{\Dec_{j}\left(\dof{\tilde{\vec{q}}}{n}, \dof{\pnt{x}}{n},
  \dof{\tilde{\vec{q}}}{m}, \dof{\pnt{x}}{m}\right)}{\alpha},
\end{equation}
and the vector $\vec{g}^{v}_{\alpha}$ is the evaluation of the source at the
volume quadrature
\begin{equation}
  \dof{\vec{g}^{v}_{\alpha}}{n}
  =
  \dof{\vec{g}\left(\dof{\tilde{\vec{q}}^{v}}{n}, \dof{\revb{\vec{x}^{v}}}{n}\right)}{\alpha}.
\end{equation}
\revb{
For the element surface quadrature points, the numerical flux vector $\Dec^*_{j\alpha}$ components
are
\begin{equation}
  \dof{\Dec^*_{j\alpha}}{n}
  =
  \dof{\Dec^*_{j}\left(\dof{\tilde{\vec{q}}^{-f}}{n}, \dof{\pnt{x}^{-f}}{n},
  \dof{\tilde{\vec{q}}^{+f}}{n}, \dof{\pnt{x}^{+f}}{n}\right)}{\alpha},
\end{equation}
\revs{where the superscripts $-f$ and $+f$ refer to the inner and outer traces, respectively}.
Since the numeric flux is not symmetric, it will be useful later to define
the numerical flux vectors $\Dec^{-*}_{j\alpha}$ and $\Dec^{+*}_{j\alpha}$ by
their components as
\begin{align}
  \dof{\Dec^{-*}_{j\alpha}}{n} &= \dof{\Dec^*_{j\alpha}}{n},&
  \dof{\Dec^{+*}_{j\alpha}}{n} &=
  \dof{\Dec^*_{j}\left(\dof{\tilde{\vec{q}}^{+f}}{n}, \dof{\pnt{x}^{+f}}{n},
  \dof{\tilde{\vec{q}}^{-f}}{n}, \dof{\pnt{x}^{-f}}{n}\right)}{\alpha}.
\end{align}
}
The vector $\tilde{\vec{q}}$ is defined on the combined volume and surface
quadrature grid from the expansion coefficients $\vec{q}$.
A natural and consistent choice is $\tilde{\vec{q}}_{\alpha} = \mat{V}
\vec{q}^{N}_{\alpha}$, but as will be seen in the next section, for general
element types entropy stability requires that $\tilde{\vec{q}}$ be defined
through an entropy-projection. With the flux matrix
notation~\eqref{eq:flux:matrix} we have the approximation
\begin{equation}
  \begin{split}
    \int_{\refelm{K}}
    G_{jk}(\pnt{\xi})
  &
  {\left.
      \pder{
      }{\xi_{k}}
      \left(
        {p_{\alpha}(\pnt{\eta})}
        \dof{\Dec_{j}(\vec{q}(\pnt{\eta}), \pnt{x}(\pnt{\eta}),
        \vec{q}(\pnt{\xi}), \pnt{x}(\pnt{\xi}))}{\alpha}
      \right)
    \right|}_{\pnt{\eta} = \pnt{\xi}}\refvolm\\
    &\approx
    {\left(\vec{p}_{\alpha}^{N}\right)}^{T}
    \mat{V}^{T}
    \mat{G}_{jk}
    \left(\mat{Q}_{k} \circ \Dec_{j\alpha} \right) \vec{1},\\
  \end{split}
\end{equation}
and
\begin{equation}
  \begin{split}
    \int_{\refelm{K}}
    G_{jk}(\pnt{\xi})
    &
    {\left.
        \pder{
        }{\xi_{k}}
        \left(
          {p_{\alpha}(\pnt{\xi})}
          \dof{
            \Dec_{j}(\vec{q}(\pnt{\xi}), \pnt{x}(\pnt{\xi}),
            \vec{q}(\pnt{\eta}), \pnt{x}(\pnt{\eta}))
          }{\alpha}
        \right)
      \right|}_{\pnt{\eta} = \pnt{\xi}}\refvolm\\
    &\approx
    {\left(\vec{p}_{\alpha}^{N}\right)}^{T}
    \mat{V}^{T}
    \left( \Dec_{j\alpha} \circ \mat{Q}_{k}^{T} \right)
    \mat{G}_{jk}
    \vec{1}.
  \end{split}
\end{equation}

\begin{remark}\label{remark:lgl}
  When the quadrature and interpolation points are both Legendre--Gauss--Lobatto,
  i.e., the DG spectral element method, $\mat{Q}_{i}$ can be replaced with
  $\mat{Q}^{v}_{i}$, the mass matrix $\mat{M}^{v}$ is diagonal, $\mat{P}^{v} =
  \mat{V}^{v} = \mat{I}$, and $\mat{V}^{f}$ selects points from the
  volume quadrature that are on the boundary.
\end{remark}

\subsection{Entropy stable DG with flux differencing in fluctuation form}

By construction, DG scheme~\eqref{eq:dg:mat} is consistent, and here we discuss
what is required to ensure entropy stability.
Following \citet{Castro2013}, we assume the volume numerical flux
satisfies the entropy conservation property\footnote{
  For conservation laws, using the numerical flux in fluctuation
  form~\eqref{eq:num:flux:cons} it is straightforward to show
  that~\eqref{eq:shuffle} reduces to the standard conservative property:
  $\left(\vec{\beta}^{T}(\vec{q}, \vec{x}) - \vec{\beta}^{T}(\vec{p},
  \vec{y})\right) \Fec_{i}(\vec{q}, \vec{x}, \vec{p}, \vec{y})
  =
  \psi_{i}(\vec{q}, \vec{x})
  -
  \psi_{i}(\vec{p}, \vec{y})$ \revb{where $\psi_{i} = \vec{\beta}^{T} \vec{f}_{i} - \zeta_{i}$ is the
  entropy potential}; see \citet{Castro2013}.
},
\begin{subequations}
  \label{eq:shuffle}
  \revb{
  \begin{equation}
    \label{eq:shuffle:volume}
    \vec{\beta}^{T}(\vec{p}, \vec{y})
    \Dec_{i}(\vec{p}, \vec{y}, \vec{q}, \vec{x})
    -
    \vec{\beta}^{T}(\vec{q}, \vec{x})
    \Dec_{i}(\vec{q}, \vec{x}, \vec{p}, \vec{y})
    =
    \zeta_{i}(\vec{q}, \vec{x})
    -
    \zeta_{i}(\vec{p}, \vec{y}),
  \end{equation}
  }
  and the surface flux the entropy dissipation property
  \revb{
  \begin{equation}
    \label{eq:shuffle:surface}
    \sum_{i=1}^{d}
    n_i(x)
    \left(
    \vec{\beta}^{T}(\vec{p}, \vec{y}) \Dec^{*}_{i}(\vec{p}, \vec{y}, \vec{q}, \vec{x})
    -
    \vec{\beta}^{T}(\vec{q}, \vec{x}) \Dec^{*}_{i}(\vec{q}, \vec{x}, \vec{p}, \vec{y})
    \right)
    \le
    \sum_{i=1}^{d}
    n_i(x)
    \left(
    \zeta_{i}(\vec{q}, \vec{x})
    -
    \zeta_{i}(\vec{p}, \vec{y})
    \right);
  \end{equation}
}
\end{subequations}
we refer to relations~\eqref{eq:shuffle:volume} and~\eqref{eq:shuffle:surface}
as conservative and dissipative entropy shuffle relations.
For entropy stability it is critical that the numerical fluxes, source terms,
and entropy variables be evaluated with a $\tilde{\vec{q}}$ that properly
accounts for variational crimes.  To do this we define the auxiliary quantities
\begin{subequations}
  \begin{align}
    \vec{q}_{\alpha}^{v} &= \mat{V}^{v} \vec{q}_{\alpha}^{N},&
    \vec{q}_{\alpha}^{f} &= \mat{V}^{f} \vec{q}_{\alpha}^{N},&
    \vec{q}_{\alpha}     &= \mat{V}     \vec{q}_{\alpha}^{N},&
    \alpha &= 1, \dots, N_{c},\\
    \vec{x}_{i}^{v} &= \mat{V}^{v} \vec{x}_{i}^{N},&
    \vec{x}_{i}^{f} &= \mat{V}^{f} \vec{x}_{i}^{N},&
    \vec{x}_{i}     &= \mat{V}     \vec{x}_{i}^{N},&
    i &= 1, \dots, 3,\\
  \end{align}
\end{subequations}
with the $\vec{q}^{v}$, $\vec{q}^{f}$, $\vec{q}$, $\vec{x}^{v}$,
$\vec{x}^{f}$, and $\vec{x}$ being the solution and coordinate vectors with all
the components.
The expansion coefficients of the entropy variables are defined using the
quadrature $L^{2}$ projection operator~\eqref{eq:quad:proj},
\begin{equation}
  \vec{\beta}^{N}_{\alpha} =
  \mat{P}^{v} \beta_{\alpha}\left( \vec{q}^{v}, \vec{x}^{v} \right),
\end{equation}
with $\vec{\beta}^{N}$ being the vectors of all components; we similarly define
$\vec{\beta}^{v}_{\alpha}$,
$\vec{\beta}^{f}_{\alpha}$,
$\vec{\beta}_{\alpha}$,
$\vec{\beta}^{v}$,
$\vec{\beta}^{f}$, and
$\vec{\beta}$ as above.
With this, the entropy-projected solution vectors are
\begin{subequations}
  \label{eq:qtilde}
  \begin{align}
    \tilde{\vec{q}}^{v}_{\alpha} &= \beta^{-1}_{\alpha}\left( \vec{\beta}^{v}, \vec{x}^{v} \right),\\
    \tilde{\vec{q}}^{f}_{\alpha} &= \beta^{-1}_{\alpha}\left( \vec{\beta}^{f}, \vec{x}^{f}  \right),\\
    \tilde{\vec{q}}_{\alpha} &= \beta^{-1}_{\alpha}\left( \vec{\beta}, \vec{x} \right)
    =
    \begin{bmatrix}
      \tilde{\vec{q}}^{v}_{\alpha}\\
      \tilde{\vec{q}}^{f}_{\alpha}
    \end{bmatrix},
  \end{align}
\end{subequations}
with $\tilde{\vec{q}}^{v}$, $\tilde{\vec{q}}^{f}$, and $\tilde{\vec{q}}$ being
the combined entropy-projected solution vectors.

\begin{lemma}\label{lemma:single:es}
  If numerical flux $\Dec_{j}$ satisfies conservative entropy
  shuffle~\eqref{eq:shuffle} and $\Dec_{j\alpha}$ is evaluated with
  $\tilde{\vec{q}}$ as defined in~\eqref{eq:qtilde} then DG
  scheme~\eqref{eq:dg:mat} on a single element satisfies the entropy
  relationship
  \begin{equation}
    \label{eq:single:es}
    \revb{
      \vec{1}^{T}\mat{J}^{v} \mat{W}^{v} \der{\eta\left(\vec{q}^{v}\right)}{t}\\
    }
      \le
      \vec{1}^{T}
      \mat{J}^{v} \mat{W}^{v}
      \Pi\left( \tilde{\vec{q}}^{v}, \pnt{x}^{v} \right)
      \revb{+}
      \sum_{j,k=1}^{d}
      \left(
        \vec{1}^{T} \mat{G}_{jk}^{+f}\mat{B}_{k}^{+f}\vec{\zeta}_{j}^{+f}
        +
        \sum_{\alpha=1}^{N_{c}}
        {\left(\vec{\beta}^{+f}_{\alpha}\right)}^{T}
        \mat{G}_{jk}^{+f} \mat{B}_{k}^{+f} \Dec_{j\alpha}^{+*}
      \right).
  \end{equation}
\end{lemma}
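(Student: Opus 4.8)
\emph{Approach.} Following the continuous manipulation that turns~\eqref{eq:gov} into~\eqref{beta_dot_gov}, I would test the semi-discretization~\eqref{eq:dg:mat} against the projected entropy variables: multiply~\eqref{eq:dg:mat} on the left by ${(\vec{\beta}^{N}_{\alpha})}^{T}$, sum over $\alpha=1,\dots,N_{c}$, and identify the four resulting contributions. For the time-derivative term, $\vec{\beta}^{N}_{\alpha}=\mat{P}^{v}\beta_{\alpha}(\vec{q}^{v},\vec{x}^{v})$ together with ${(\mat{P}^{v})}^{T}\mat{M}^{v}=\mat{W}^{v}\mat{J}^{v}\mat{V}^{v}$, which follows from~\eqref{eq:quad:proj} and the symmetry of $\mat{M}^{v}$, turns it into $\sum_{\alpha}\beta_{\alpha}(\vec{q}^{v},\vec{x}^{v})^{T}\mat{W}^{v}\mat{J}^{v}\mat{V}^{v}\der{\vec{q}^{N}_{\alpha}}{t}$; since $\mat{V}^{v}\der{\vec{q}^{N}_{\alpha}}{t}=\der{\vec{q}^{v}_{\alpha}}{t}$, $\vec{x}^{v}$ is time-independent, and $\beta_{\alpha}=\partial\eta/\partial q_{\alpha}$ by~\eqref{eq:entropy_variables_def}, the chain rule collapses this to the left-hand side $\vec{1}^{T}\mat{J}^{v}\mat{W}^{v}\der{\eta(\vec{q}^{v})}{t}$ of~\eqref{eq:single:es}. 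The point of care here is that $\vec{\beta}^{N}$ is an $L^{2}$ projection and not an interpolant; it is the identity $\mat{P}^{v}\mat{V}^{v}=\mat{I}$ of~\eqref{eq:pqvqinverse} that lets the projection act as the identity on $\der{\vec{q}^{v}_{\alpha}}{t}$.

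\emph{Source and volume terms.} The defining property of the entropy projection~\eqref{eq:qtilde} is that $\beta_{\alpha}(\tilde{\vec{q}},\vec{x})$ coincides with $\mat{V}\vec{\beta}^{N}_{\alpha}$ pointwise on the combined volume/surface grid (and on the volume or surface grid alone), which is exactly what reconciles the variational crimes in the fluxes and source with an exact discrete entropy relation. Using this on the volume grid, $\sum_{\alpha}{(\vec{\beta}^{v}_{\alpha})}^{T}\mat{J}^{v}\mat{W}^{v}\vec{g}^{v}_{\alpha}$ becomes $\vec{1}^{T}\mat{J}^{v}\mat{W}^{v}\Pi(\tilde{\vec{q}}^{v},\vec{x}^{v})$ after invoking $\Pi=\vec{\beta}^{T}\vec{g}$, valid under the standing restriction that the bracketed sum in~\eqref{eq:entropy_production} vanishes. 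For the volume flux-differencing term, expanding the Hadamard products componentwise and relabelling dummy node indices combines the two terms into $\sum_{j,k}\sum_{n,m}\dof{\mat{G}_{jk}}{n,n}\dof{\mat{Q}_{k}}{n,m}$ times the antisymmetric combination $\vec{\beta}^{T}(\dof{\tilde{\vec{q}}}{n},\dof{\vec{x}}{n})\Dec_{j}(\dof{\tilde{\vec{q}}}{n},\dof{\vec{x}}{n},\dof{\tilde{\vec{q}}}{m},\dof{\vec{x}}{m})-\vec{\beta}^{T}(\dof{\tilde{\vec{q}}}{m},\dof{\vec{x}}{m})\Dec_{j}(\dof{\tilde{\vec{q}}}{m},\dof{\vec{x}}{m},\dof{\tilde{\vec{q}}}{n},\dof{\vec{x}}{n})$; the conservative entropy shuffle~\eqref{eq:shuffle:volume} rewrites this bracket as $\dof{\vec{\zeta}_{j}}{m}-\dof{\vec{\zeta}_{j}}{n}$, the part multiplying $\dof{\vec{\zeta}_{j}}{n}$ drops because $\mat{Q}_{k}\vec{1}=\vec{0}$ by~\eqref{eq:Qconst}, and applying the generalized SBP property~\eqref{eq:sbp:prop} and then the discrete metric identity~\eqref{eq:gcl} reduces the remainder $\sum_{j,k}\vec{1}^{T}\mat{G}_{jk}\mat{Q}_{k}\vec{\zeta}_{j}$ to the pure inner-trace surface sum $\sum_{j,k}\vec{1}^{T}\mat{G}^{f}_{jk}\mat{B}^{f}_{k}\vec{\zeta}^{f}_{j}$.

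\emph{Assembling and the inequality.} The surface-flux term of~\eqref{eq:dg:mat} contributes $\sum_{j,k}\sum_{\alpha}{(\vec{\beta}^{f}_{\alpha})}^{T}\mat{G}^{f}_{jk}\mat{B}^{f}_{k}\Dec^{*}_{j\alpha}$, so the four pieces assemble into the exact identity $\vec{1}^{T}\mat{J}^{v}\mat{W}^{v}\der{\eta(\vec{q}^{v})}{t}=\vec{1}^{T}\mat{J}^{v}\mat{W}^{v}\Pi(\tilde{\vec{q}}^{v},\vec{x}^{v})-\sum_{j,k}\bigl(\vec{1}^{T}\mat{G}^{f}_{jk}\mat{B}^{f}_{k}\vec{\zeta}^{f}_{j}+\sum_{\alpha}{(\vec{\beta}^{f}_{\alpha})}^{T}\mat{G}^{f}_{jk}\mat{B}^{f}_{k}\Dec^{*}_{j\alpha}\bigr)$, which for an entropy-conserving surface flux already is~\eqref{eq:single:es} with equality. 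In general, $\sum_{k}\mat{G}^{f}_{jk}\mat{B}^{f}_{k}$ is the physical-outward-normal-weighted surface quadrature, by the discrete form of $J^{f}n_{j}=\sum_{k}J^{v}\pder{\xi_{j}}{x_{k}}\refelm{n}_{k}\refelm{J}^{f}$, and $\mat{W}^{f},\mat{J}^{f}$ are positive; applying the dissipative entropy shuffle~\eqref{eq:shuffle:surface} at each surface node, with the inner trace playing the role of $(\vec{p},\vec{y})$ and the neighbouring trace the role of $(\vec{q},\vec{x})$, then bounds the bracketed inner-trace quantity by the corresponding outer-trace quantity, and re-expressing the normal-weighting through the neighbouring element turns $\mat{G}^{f}_{jk}\mat{B}^{f}_{k}$ into $-\mat{G}^{+f}_{jk}\mat{B}^{+f}_{k}$, which is exactly the outer-trace boundary term on the right of~\eqref{eq:single:es}.

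\emph{Main obstacle.} The crux is this last step: lining up the inner/outer assignment of the two arguments of $\Dec^{*}$ with the $\pm f$ conventions and the orientation of $\refelm{\vec{n}}$ so that~\eqref{eq:shuffle:surface} is invoked in the direction that gives the surface contribution the correct sign, since a single sign slip there reverses the inequality and since the lemma is stated per element with the neighbour's metric and normal data appearing through the $+f$ superscripts. Everything else is direct manipulation of the SBP and quadrature-projection operators, once the entropy-projection property $\vec{\beta}=\vec{\beta}(\tilde{\vec{q}},\vec{x})$ on the combined grid has been used to convert evaluations of $\Dec$, $\vec{g}$, and the entropy variables into quantities to which the shuffle relations apply.
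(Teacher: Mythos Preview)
Your proposal is correct and follows the same route as the paper's own proof: test~\eqref{eq:dg:mat} with $\vec{\beta}^{N}_{\alpha}$, reduce the time term via ${(\mat{P}^{v})}^{T}\mat{M}^{v}=\mat{W}^{v}\mat{J}^{v}\mat{V}^{v}$ and the chain rule, collapse the source to $\Pi(\tilde{\vec{q}}^{v},\vec{x}^{v})$, expand the volume Hadamard terms and apply~\eqref{eq:shuffle:volume}, kill the diagonal piece with~\eqref{eq:Qconst}, use~\eqref{eq:sbp:prop} and~\eqref{eq:gcl} to leave only the inner-trace $\vec{\zeta}^{f}$ surface sum, and then invoke~\eqref{eq:shuffle:surface} together with $\sum_{k}\mat{G}^{-f}_{jk}\mat{B}^{-f}_{k}=-\sum_{k}\mat{G}^{+f}_{jk}\mat{B}^{+f}_{k}$ on the face term. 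Your identification of the orientation bookkeeping in the last step as the only delicate point is exactly where the paper spends its care as well; the one aside about $\mat{P}^{v}\mat{V}^{v}=\mat{I}$ is not actually needed in your time-term reduction, but this does not affect the argument.
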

\begin{proof}
  On a single element, assuming exactness in time, it follows that
  \begin{equation}
    \begin{split}
      \vec{1}^{T}\mat{J}^{v}  \mat{W}^{v} \revb{\der{\eta\left(\vec{q}^{v}\right)}{t}}
    &=
    \sum_{\alpha=1}^{N_{c}}
    {\left(\beta_{\alpha}\left(\vec{q}^{v}\right)\right)}^{T}
    \mat{J}^{v}
    \mat{W}^{v} \der{\vec{q}_{\alpha}^{v}}{t}\\
    &=
    \sum_{\alpha=1}^{N_{c}}
    {\left(\beta_{\alpha}\left(\vec{q}^{v}\right)\right)}^{T}
    {\left(\mat{P}^{v}\right)}^{T}
    {\left(\mat{V}^{v}\right)}^{T}
    \mat{J}^{v}
    \mat{W}^{v} \mat{V}^{v}\der{\vec{q}^{N}_{\alpha}}{t}\\
    &=
    \sum_{\alpha=1}^{N_{c}}
    {\left(\vec{\beta}^{N}_{\alpha}\right)}^{T}
    \mat{M}^{v}
    \der{\vec{q}^{N}_{\alpha}}{t}.
    \end{split}
  \end{equation}
  Using DG scheme~\eqref{eq:dg:mat} then gives
  \begin{equation}
    \begin{split}
      \sum_{\alpha=1}^{N_{c}}
      {\left(\vec{\beta}^{N}_{\alpha}\right)}^{T}
      \mat{M}^{v}
      \der{\vec{q}^{N}_{\alpha}}{t}
      =&
      -
      \sum_{\alpha=1}^{N_{c}}
      \sum_{j,k=1}^{d}
      \vec{\beta}_{\alpha}^{T}
      \left(
        \mat{G}_{jk}
        \left(\mat{Q}_{k}\circ \Dec_{j\alpha} \right)
        -
        \left( \Dec_{j\alpha} \circ \mat{Q}_{k}^{T} \right)
        \mat{G}_{jk}
      \right)
      \vec{1}\\
       &+
       \sum_{\alpha=1}^{N_{c}}
       {\left(\vec{\beta}^{v}_{\alpha}\right)}^{T}
       \mat{J}^{v} \mat{W}^{v} \vec{g}^{v}_{\alpha}
       -
       \sum_{\alpha=1}^{N_{c}}
       \sum_{j,k=1}^{d}
       {\left(\vec{\beta}^{f}_{\alpha}\right)}^{T}
       \mat{G}_{jk}^{f} \mat{B}_{k}^{f} \Dec_{j\alpha}^{*}
       .
    \end{split}
  \end{equation}
  Considering the source term, we have that
  \begin{equation}
    \label{eq:es:source}
    \sum_{\alpha=1}^{N_{c}}
    {\left(\vec{\beta}^{v}_{\alpha}\right)}^{T}
    \mat{J}^{v} \mat{W}^{v} \vec{g}^{v}_{\alpha}
    =
    \sum_{\alpha=1}^{N_{c}}
    {\left(\vec{\beta}^{v}_{\alpha}\right)}^{T}
    \mat{J}^{v} \mat{W}^{v}
    \vec{g}_{\alpha}\left(\tilde{\vec{q}}^{v}, \vec{x}^{v}\right)
    =
    \vec{1}^{T}
    \mat{J}^{v} \mat{W}^{v}
    \Pi\left( \tilde{\vec{q}}^{v}, \pnt{x}^{v} \right).
  \end{equation}
  The volume flux term gives
  \begin{equation}
    \begin{split}
      \sum_{\alpha=1}^{N_{c}}
      \sum_{j,k=1}^{d}
      \vec{\beta}_{\alpha}^{T}
    &
    \left(
      \mat{G}_{jk}
      \left(\mat{Q}_{k}\circ \Dec_{j\alpha} \right)
      -
      \left( \Dec_{j\alpha} \circ \mat{Q}_{k}^{T} \right)
      \mat{G}_{jk}
    \right)
    \vec{1}\\
    &=
    \sum_{\alpha=1}^{N_{c}}
    \sum_{n,m=1}^{N_q}
    \sum_{j,k=1}^{d}
    \dof{\vec{\beta}_{\alpha}}{n}
    \dof{\Dec_{j\alpha}}{nm}
    \left(
      \dof{\mat{G}_{jk}}{n}
      \dof{\mat{Q}_{k}}{nm}
      -
      \dof{\mat{Q}_{k}}{mn}
      \dof{\mat{G}_{jk}}{m}
    \right)\\
    &=
    \sum_{\alpha=1}^{N_{c}}
    \sum_{n,m=1}^{N_q}
    \sum_{j,k=1}^{d}
    \dof{\mat{G}_{jk}}{n}
    \dof{\mat{Q}_{k}}{nm}
    \left(
      \dof{\vec{\beta}_{\alpha}}{n}
      \dof{\Dec_{j\alpha}}{nm}
      -
      \dof{\vec{\beta}_{\alpha}}{m}
      \dof{\Dec_{j\alpha}}{mn}
    \right)\\
    &=
    \sum_{n,m=1}^{N_q}
    \sum_{j,k=1}^{d}
    \dof{\mat{G}_{jk}}{n}
    \dof{\mat{Q}_{k}}{nm}
    \left(
      \dof{\vec{\zeta}_{j}}{m}
      -
      \dof{\vec{\zeta}_{j}}{n}
    \right)\\
    &=
    \sum_{j,k=1}^{d}
    \left(
      \vec{1}^{T}
      \mat{G}_{jk}
      \mat{Q}_{k}
      \vec{\zeta}_{j}
      -
      \vec{\zeta}_{j}^{T}
      \mat{G}_{jk}
      \mat{Q}_{k}
      \vec{1}
    \right)\\
    &=
    \sum_{j,k=1}^{d}
    \vec{1}^{T}
    \mat{G}_{jk}
    \mat{Q}_{k}
    \vec{\zeta}_{j},
    \end{split}
  \end{equation}
  where we have used the entropy conservative shuffle~\eqref{eq:shuffle:volume} and
  exactness of the derivative matrix for constants~\eqref{eq:Qconst}.
  Using SBP property~\eqref{eq:sbp:prop} and geometric conservation law
  assumption~\eqref{eq:gcl} gives
  \begin{equation}
    \label{eq:es:vol}
    \sum_{j,k=1}^{d}
    \vec{1}^{T}
    \mat{G}_{jk}
    \mat{Q}_{k}
    \vec{\zeta}_{j}
    =
    \sum_{j,k=1}^{d}
    \left(
      \vec{1}^{T}
      \mat{G}_{jk}^{f}
      \mat{B}_{k}^{f}
      \vec{\zeta}_{j}^{f}
      -
      \vec{1}^{T}
      \mat{G}_{jk}
      \mat{Q}_{k}^{T}
      \vec{\zeta}_{j}
    \right)
    =
    \sum_{j,k=1}^{d}
    \vec{1}^{T}
    \mat{G}_{jk}^{-f}
    \mat{B}_{k}^{-f}
    \vec{\zeta}_{j}^{-f}.
  \end{equation}
  Now considering the face term yields
  \revb{
  \begin{equation}
    \label{eq:es:face}
    \begin{split}
      -\sum_{\alpha=1}^{N_{c}}
      \sum_{j,k=1}^{d}
    {\left(\vec{\beta}^{f}_{\alpha}\right)}^{T}
    \mat{G}_{jk}^{f} \mat{B}_{k}^{f} \Dec_{j\alpha}^{*}
    &=
    -\sum_{\alpha=1}^{N_{c}}
    \sum_{j,k=1}^{d}
    {\left(\vec{\beta}^{-f}_{\alpha}\right)}^{T}
    \mat{G}_{jk}^{-f} \mat{B}_{k}^{-f} \Dec_{j\alpha}^{-*}\\
    &\le
    -\sum_{j,k=1}^{d}
    \left(
        \vec{1}^{T} \mat{G}_{jk}^{-f}\mat{B}_{k}^{-f}\vec{\zeta}_{j}^{+f}
      - \vec{1}^{T} \mat{G}_{jk}^{-f}\mat{B}_{k}^{-f}\vec{\zeta}_{j}^{-f}
      + \sum_{\alpha=1}^{N_{c}}
    \left(
      {\left(\vec{\beta}^{+f}_{\alpha}\right)}^{T}
      \mat{G}_{jk}^{-f} \mat{B}_{k}^{-f} \Dec_{j\alpha}^{+*}
    \right)
    \right)\\
    &= \phantom{-}
    \sum_{j,k=1}^{d}
    \left(
        \vec{1}^{T} \mat{G}_{jk}^{+f}\mat{B}_{k}^{+f}\vec{\zeta}_{j}^{+f}
      + \vec{1}^{T} \mat{G}_{jk}^{-f}\mat{B}_{k}^{-f}\vec{\zeta}_{j}^{-f}
      + \sum_{\alpha=1}^{N_{c}}
    \left(
      {\left(\vec{\beta}^{+f}_{\alpha}\right)}^{T}
      \mat{G}_{jk}^{+f} \mat{B}_{k}^{+f} \Dec_{j\alpha}^{+*}
    \right)
    \right),
    \end{split}
  \end{equation}
  where we have used
  the dissipative entropy shuffle~\eqref{eq:shuffle:surface}
  and the fact that
  $\sum_{k=1}^{d}\mat{G}_{jk}^{-f}\mat{B}_{k}^{-f} =
  -\sum_{k=1}^{d}\mat{G}_{jk}^{+f}\mat{B}_{k}^{+f}$.
}

  Putting the source~\eqref{eq:es:source}, volume~\eqref{eq:es:vol}, and
  face~\eqref{eq:es:face} contributions together gives the desired result.
  \qed
\end{proof}
\begin{theorem}
  If the conditions of Lemma~\ref{lemma:single:es} are satisfied, then
  DG scheme~\eqref{eq:dg:mat} on a \revb{mesh with periodic boundary conditions} satisfies the entropy
  stability relationship
  \begin{equation}
    \sum_{K\in\mathcal{T}}
    \sum_{\alpha=1}^{N_{c}}
    \vec{1}^{T}\mat{J}^{v,K} \mat{W}^{v,K} \der{\eta\left(\vec{q}^{v,K}\right)}{t}
    \le
    \sum_{K\in\mathcal{T}}
    \vec{1}^{T}
    \mat{J}^{v,K} \mat{W}^{v,K}
    \Pi\left( \tilde{\vec{q}}^{v,K}, \pnt{x}^{v,K} \right),
  \end{equation}
  where $\mathcal{T}$ is the set of all elements.
\end{theorem}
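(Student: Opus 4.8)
The plan is to deduce the global estimate from the single-element estimate of Lemma~\ref{lemma:single:es} by summing over all elements and showing that the leftover surface contributions collapse on a periodic mesh. First I would write~\eqref{eq:single:es} for every element $K\in\mathcal{T}$ and add the resulting inequalities. On the left the element quadrature terms assemble into $\sum_{K}\vec{1}^{T}\mat{J}^{v,K}\mat{W}^{v,K}\der{\eta(\vec{q}^{v,K})}{t}$, and on the right the volume production terms assemble into $\sum_{K}\vec{1}^{T}\mat{J}^{v,K}\mat{W}^{v,K}\Pi(\tilde{\vec{q}}^{v,K},\pnt{x}^{v,K})$, which is exactly the claimed right-hand side. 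It therefore remains to control the sum over $K$ of the surface terms on the right of~\eqref{eq:single:es} (the $\vec{\zeta}_{j}^{+f}$ term together with the $\vec{\beta}^{+f}_{\alpha}$-weighted numerical-flux term): one must show this sum is nonpositive, and it should be exactly zero when the surface flux is entropy-conservative.

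Next I would reorganize this surface sum over the interfaces of the mesh. Because the boundary conditions are periodic, every surface quadrature point lies on an interface shared by exactly two elements, say $K^{-}$ and $K^{+}$; as in the proof of Lemma~\ref{lemma:single:es} I would use that the entropy-projected traces, the entropy variables, the physical surface Jacobian, and the surface metrics are single-valued there, while the reference-normal-weighted operators of the two elements satisfy $\sum_{k}\mat{G}_{jk}^{-f}\mat{B}_{k}^{-f}=-\sum_{k}\mat{G}_{jk}^{+f}\mat{B}_{k}^{+f}$ because the two outward normals are opposite. Collecting the $K^{-}$ and $K^{+}$ contributions at a shared interface then produces, at each surface node, a multiple of the jump of $\zeta_{j}$ across the interface plus the difference $\vec{\beta}^{T}(\vec{q}^{-})\Dec_{j}^{*}(\vec{q}^{-},\pnt{x}^{-},\vec{q}^{+},\pnt{x}^{+})-\vec{\beta}^{T}(\vec{q}^{+})\Dec_{j}^{*}(\vec{q}^{+},\pnt{x}^{+},\vec{q}^{-},\pnt{x}^{-})$, both contracted with the common normal. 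This is precisely the quantity that the dissipative entropy shuffle~\eqref{eq:shuffle:surface} bounds by the same jump of $\zeta_{j}$ with the opposite sign, so the two pieces cancel up to a nonpositive remainder; for an entropy-conservative flux the conservative shuffle~\eqref{eq:shuffle:volume} makes the remainder vanish, recovering the entropy-conservation statement.

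Discarding the nonpositive interface sum then gives the inequality of the theorem. The main obstacle is the orientation bookkeeping at an interface: one has to match which element is the $-$ side and which is the $+$ side, track the sign flip of $\mat{B}_{k}^{f}$ and of the metric-weighted normal $\sum_{k}\mat{G}_{jk}^{f}\mat{B}_{k}^{f}$, and keep the state ordering straight in the non-symmetric numerical flux $\Dec_{j}^{*}$, so that the paired terms of the two elements line up with the sign convention in~\eqref{eq:shuffle:surface}. Equivalently, and perhaps more cleanly, one may re-run the proof of Lemma~\ref{lemma:single:es} starting from $\sum_{K}$ of the DG scheme~\eqref{eq:dg:mat}, invoking the dissipative shuffle once per interface rather than once per element. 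A secondary point is that the discrete cancellation of the metric-weighted normals across an interface relies on the volume and surface metrics being defined consistently, as in the discussion around the geometric conservation law~\eqref{eq:gcl}.
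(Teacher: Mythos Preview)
Your alternative route---re-running the argument of Lemma~\ref{lemma:single:es} on the sum over elements and invoking the dissipative shuffle~\eqref{eq:shuffle:surface} once per interface---is correct and is what the paper's one-line proof is doing. Your primary route, however, has a sign problem. If you literally add the single-element inequalities~\eqref{eq:single:es} over all $K$ and try to show the leftover surface sum is nonpositive, you will find it is nonnegative instead. Tracing the derivation in the proof of Lemma~\ref{lemma:single:es}, the face contribution on the right of~\eqref{eq:single:es} for element $K$ at the face shared with $K'$ is exactly
\[
X^{K'}\;:=\;\sum_{j,k}\Big(\vec{1}^{T}\mat{G}_{jk}^{-f,K'}\mat{B}_k^{-f,K'}\vec{\zeta}_j^{-f,K'}
+\sum_\alpha(\vec{\beta}_\alpha^{-f,K'})^{T}\mat{G}_{jk}^{-f,K'}\mat{B}_k^{-f,K'}\Dec_{j\alpha}^{-*,K'}\Big),
\]
i.e., the neighbor's own ``entropy boundary flux plus raw face'' combination. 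Adding the contributions from both sides of an interface therefore gives $X^{K}+X^{K'}$, which is precisely the quantity that~\eqref{eq:shuffle:surface} bounds \emph{from below} by zero---that is exactly how it was used inside the lemma to produce the $\le$ in~\eqref{eq:single:es}. It does not give an upper bound. Hence the summed inequality reads $\sum_K\vec{1}^T\mat{J}^{v,K}\mat{W}^{v,K}\der{\eta(\vec{q}^{v,K})}{t}\le\sum_K\Pi^K+\sum_e(X^K+X^{K'})$ with $X^K+X^{K'}\ge 0$, which does not imply the theorem.

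The fix is exactly your alternative: do not sum~\eqref{eq:single:es}, but sum the \emph{exact} single-element identity obtained in the lemma proof before the shuffle is invoked (combine~\eqref{eq:es:source}, \eqref{eq:es:vol}, and the first line of~\eqref{eq:es:face}), which says $\vec{1}^T\mat{J}^{v,K}\mat{W}^{v,K}\der{\eta(\vec{q}^{v,K})}{t}=\Pi^K-X^K$. Reorganizing by interfaces gives $\sum_K\der{}{t}(\cdots)=\sum_K\Pi^K-\sum_e(X^K+X^{K'})$, and now a single application of~\eqref{eq:shuffle:surface} per interface yields $X^K+X^{K'}\ge 0$ and hence the theorem. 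For an entropy-conservative surface flux the shuffle is an equality, $X^K+X^{K'}=0$, and the two routes coincide; the discrepancy appears only in the genuinely dissipative case, which is why the orientation bookkeeping you flagged cannot be made to close the primary argument.
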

\begin{proof}
  The result follows directly from the entropy
  shuffle~\eqref{eq:shuffle} applied to neighboring face terms of the single
  element entropy relation~\eqref{eq:single:es} of Lemma~\ref{lemma:single:es}.
\end{proof}

\subsection{Entropy conservative flux for the atmospheric Euler equations}

For the atmospheric Euler equations~\eqref{eq:gov:atmos}, we construct
numerical fluxes $\Dec_k$ such that the conservative entropy
shuffle~\eqref{eq:shuffle} is satisfied. Here we use a construction based on the
two-point entropy conserving flux of \citet{Chandrashekar2013} and
\citet{Renac2019}.
First, we define the following auxiliary quantities:
\begin{subequations}
  \begin{align}
    {\left(\rho u_{i}\right)}^{*} &= \avg{\rho}_{\log} \avg{u_{i}},\\
    \label{eq:pstar}
    p^{*} &= \frac{\avg{\rho}}{2\avg{b}},\\
    e^{*} &=
    \frac{1}{2(\gamma - 1)\avg{b}_{\log}}
    + \avg{\phi(\pnt{x})}
    + {\|\avg{\vec{u}}\|}_{2}^{2} - \frac{\avg{{\|\vec{u}\|}_{2}^{2}}}{2},\\
    \label{eq:rhohat}
    \hat{\rho} &= \frac{\avg{b}\avg{\rho}_{\log}}{b^-},
  \end{align}
\end{subequations}
where the average and log-average operators are defined as follows
\begin{subequations}
  \begin{align}
    \avg{a} &= \frac{a^{+} + a^{-}}{2},\\
    \avg{a}_{\log} &= \frac{a^{+} - a^{-}}{\log\left(a^{+}\right) - \log\left(a^{-}\right)}.
  \end{align}
\end{subequations}
To evaluate $\avg{\cdot}_{\log}$ in a numerically stable manner we follow the
approach outlined by \citet[ Appendix B]{Ismail2009}. With these definitions, an
entropy conservative numerical flux for $k = 1,2,3$ is
\begin{subequations}
  \begin{align}
    \Aec_{k}(\vec{q}^{-}, \pnt{x}^{-}, \vec{q}^{+}, \pnt{x}^{+})
    &=
    \frac{1}{2}
    \begin{bmatrix}
      1 & 0 & 0 & 0 & 0 & 0\\
      0 & 1 & 0 & 0 & 0 & \delta_{1k} \hat{\rho}\\
      0 & 0 & 1 & 0 & 0 & \delta_{2k} \hat{\rho}\\
      0 & 0 & 0 & 1 & 0 & \delta_{3k} \hat{\rho}\\
      0 & 0 & 0 & 0 & 1 & 0
    \end{bmatrix},\\
    \Hec_{k}(\vec{q}^{-}, \pnt{x}^{-}, \vec{q}^{+}, \pnt{x}^{+})
    &=
    \begin{bmatrix}
      {\left(\rho u_{k}\right)}^{*}\\
      {\left(\rho u_{k}\right)}^{*} \avg{u_{1}} + \delta_{1k} p^{*}\\
      {\left(\rho u_{k}\right)}^{*} \avg{u_{2}} + \delta_{2k} p^{*}\\
      {\left(\rho u_{k}\right)}^{*} \avg{u_{3}} + \delta_{3k} p^{*}\\
      e^{*} {\left(\rho u_{k}\right)}^{*} + \avg{u_{k}} p^{*}\\
      \phi\left(\vec{x}^{+}\right)
    \end{bmatrix}.
  \end{align}
\end{subequations}
Using these expressions in the definition of the numerical
flux~\eqref{eq:num:fluctuation} gives
\begin{equation}
  \label{eq:num:ec_flux}
  \Dec_{k}\left(\vec{q}^{-}, \pnt{x}^{-}, \vec{q}^{+}, \pnt{x}^{+}\right)
  =
  \begin{bmatrix}
    {\left(\rho u_{k}\right)}^{*}\\
    {\left(\rho u_{k}\right)}^{*} \avg{u_{1}} +
    \delta_{1k} \left( p^{*} + \frac{1}{2} \hat{\rho}\jmp{\phi} \right)\\
    {\left(\rho u_{k}\right)}^{*} \avg{u_{2}} +
    \delta_{2k} \left( p^{*} + \frac{1}{2} \hat{\rho}\jmp{\phi} \right)\\
    {\left(\rho u_{k}\right)}^{*} \avg{u_{3}} +
    \delta_{3k} \left( p^{*} + \frac{1}{2} \hat{\rho}\jmp{\phi} \right)\\
    e^{*} {\left(\rho u_{k}\right)}^{*} + \avg{u_{k}} p^{*}
  \end{bmatrix}
  -
  \begin{bmatrix}
    \rho^{-} u_{k}^{-}\\
    \rho^{-} u_{k}^{-} u_{1}^{-} + \delta_{1k} p^{-}\\
    \rho^{-} u_{k}^{-} u_{2}^{-} + \delta_{2k} p^{-}\\
    \rho^{-} u_{k}^{-} u_{3}^{-} + \delta_{3k} p^{-}\\
    \rho^{-} e^{-} u_{k}^{-} + u_{k}^{-} p^{-}
  \end{bmatrix},
\end{equation}
where $\jmp{\phi} = \phi(\vec{x}^{+}) - \phi(\vec{x}^{-})$.

Since the second term of~\eqref{eq:num:ec_flux} is independent of 
$\vec{q}^{+}$, it need not be included in either the volume or surface numerical
fluxes. \revs{To see this, first notice that for any matrix with identical columns
$\mat{A} = \vec{a} \vec{1}^T$ and arbitrary matrix $\mat{B}$
we have $\left(\mat{A} \circ \mat{B}\right)\vec{1} = \vec{a} \circ \left(\mat{B} \vec{1}\right)$.
Let us denote the part of the numerical flux matrix that is independent of the second argument
by $\Dec_{j\alpha}^0 = \vec{d_{j\alpha}^0} \vec{1}^T$. Using the aforementioned property in
the first derivative term of DG scheme~\eqref{eq:dg:mat} gives
\begin{equation}
    \mat{V}^{T}
    \sum_{j,k=1}^{d}
    \left(
      \mat{G}_{jk}
      \left(\mat{Q}_{k}\circ \Dec^0_{j\alpha} \right)
    \right)
    \vec{1} =
    \mat{V}^{T}
    \sum_{j,k=1}^{d}
      \mat{G}_{jk}
      \vec{d^0}_{j\alpha}\circ \left(\mat{Q}_{k} \vec{1} \right)= \vec{0},
\end{equation}
because $\mat{Q}_{k}$ is exact for constants.
In the derivative term that comes second we use the SBP property of $\mat{Q}_{k}$ to get
\begin{equation}
    -\mat{V}^{T}
    \sum_{j,k=1}^{d}
    \left(
      \left( \Dec^0_{j\alpha} \circ \left(-\mat{Q}_{k} +
    \begin{bmatrix}
      \mat{0} \\
      & \mat{B}_i^{f}
    \end{bmatrix}
     \right)
      \mat{G}_{jk}
     \right)
    \right)
    \vec{1}.
\end{equation}
Using the special form of $\Dec^0_{j\alpha}$ the first term expands to 
$
 \mat{V}^{T}
    \sum_{j,k=1}^{d}
    \left(
      d^0_{j\alpha} \circ \left(\mat{Q}_{k} \mat{G}_{jk} \vec{1} \right)
     \right) = \vec{0},
$
because of geometric conservation law~\eqref{eq:gcl}.
In the second term the boundary matrix selects
$
-\left(\mat{V}^f\right)^{T}
    \sum_{j,k=1}^{d}
    \left( \mat{G}^f_{jk} \mat{B}^f_k \Dec^{0*}_{j\alpha}\right)
$, which cancels with the identical contribution coming from the surface
term on the right hand side of~\eqref{eq:dg:mat}.
}

Entropy-stable fluxes can be constructed by adding entropy dissipation terms
to the entropy conservative flux in a number of ways, the simplest being local Lax-Friedrichs (Rusanov)
dissipation \cite[ Section 6.1]{Ranocha2018Comp}.
In \ref{app:flux} an entropy dissipative flux using a
matrix dissipation term is described
following the methodology presented in
\citet{Winters2017}.
The matrix dissipation term improves upon scalar dissipation by
distinguishing between advective and acoustic waves
using the flux Jacobian eigendecomposition.

\section{Numerical results}
In this section, numerical results verifying correctness and highlighting the
benefits of the novel entropy-stable scheme are presented.
A one-dimensional Sod problem under gravity showcases the flexibility of the method in terms
of different possible choices of quadrature points,
while demonstrating stability in the presence of shocks.
A classical atmospheric benchmark of a thermal bubble convection
is used to numerically validate entropy conservation and highlight
the benefits of the high-order DG scheme with numerical flux dissipation
for a large eddy simulation scale flow with sharp gradients and small-scale features.
A gravity wave test case in a channel, having an exact linear solution,
verifies convergence and the high-order accuracy of the scheme.
Lastly, a baroclinic wave benchmark demonstrates the robustness
of high-order entropy stable numerics
for an archetype of global weather simulation on anisotropic spherical grids.
Unless otherwise noted, the surface flux used in the test problems is the
entropy dissipative flux from \ref{app:flux}; in all cases the entropy
conservative numerical flux~\eqref{eq:num:fluctuation} is used in the volume.
In all examples, the Courant–Friedrichs–Lewy (CFL) number is defined with respect to the minimum
distance between interpolation points and an estimate of the acoustic wave speed based
on the test case initial condition.

\reva{\subsection{Sod shock tube with gravity}}
\citet{Luo2011} introduced a classical Sod shock tube placed in a
gravitational field. The domain is $[0, 1]$, the geopotential $\reva{\phi}=x$, and
the initial condition is defined as follows
\begin{equation}
  (\rho, u, p) =
  \begin{cases}
    (1, 0, 0) & x < 0.5, \\
    (0.125, 0, 0.1) & x \geq 0.5.
  \end{cases}
\end{equation}
\reva{Entropy stable no-flux boundary conditions are used on both sides of the domain~\cite{Svard2021}}.
The domain is resolved using 32 elements with polynomial order 4.
Time integration is performed using the fourth-order five-stage low storage
Runge--Kutta (LSRK54) method from \citet[(5,4) $2N$-Storage RK scheme, solution
$3$]{Carpenter1994} with CFL = 0.2.
To assess the impact of quadrature rule accuracy,
simulations using two sets of quadrature points are considered: $N+1$
Legendre--Gauss--Lobatto (LGL) points and a more accurate $N+2$ point Gauss quadrature.
\begin{figure}
  \centering
  \includegraphics[scale=1]{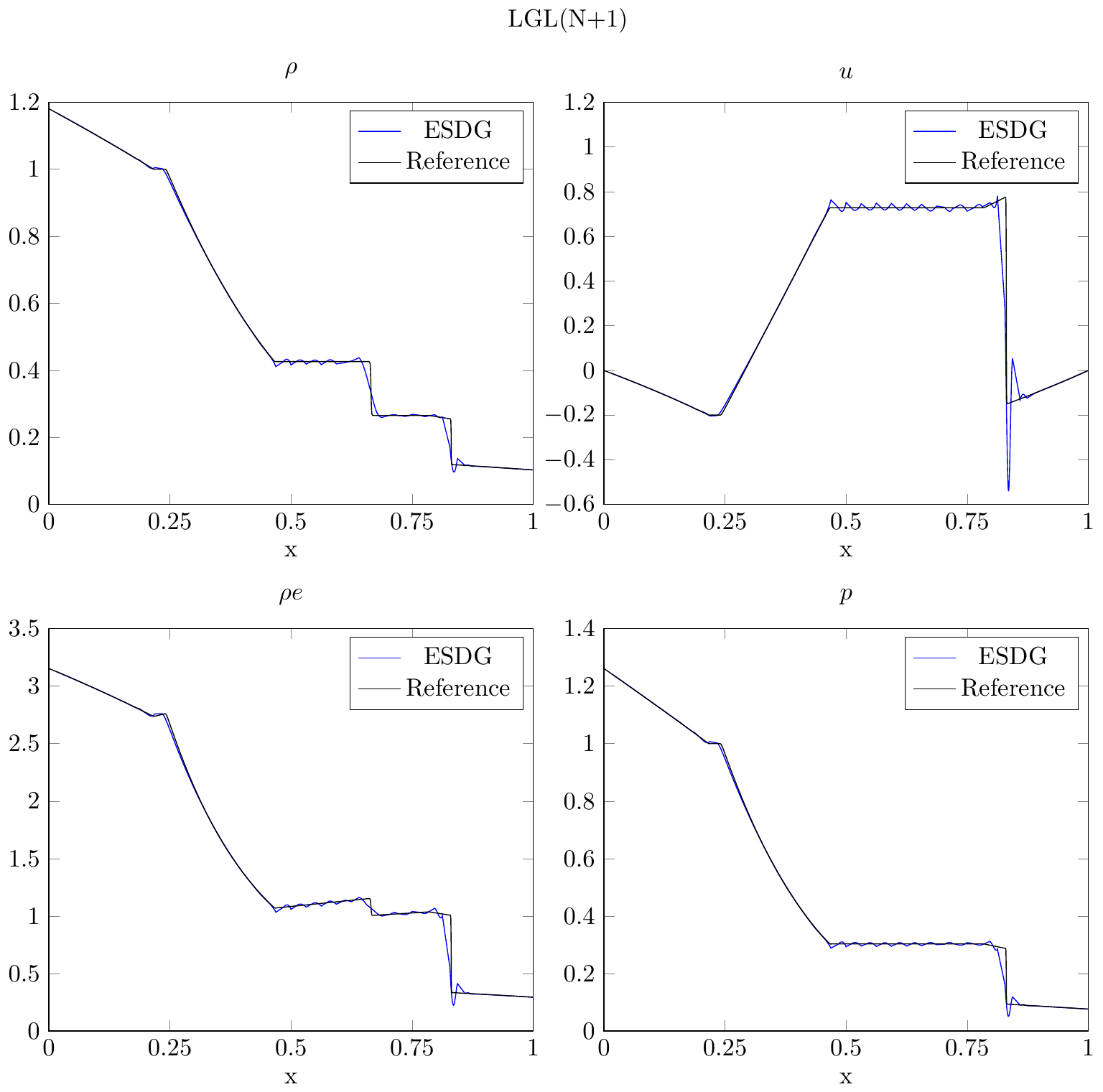}
  \caption{
    Density, velocity, total energy, and pressure at time $t=0.2$
    for the Sod shock tube under gravity. The solution is obtained on a grid
    with 32 elements with polynomial order $N=4$ using an $N+1$ point
    LGL quadrature rule.
    The reference results were obtained using a 5$^{th}$ order WENO scheme with 2000 cells.
  \label{fig:sod_lgl}}
\end{figure}
\begin{figure}
  \centering
  \includegraphics[scale=1]{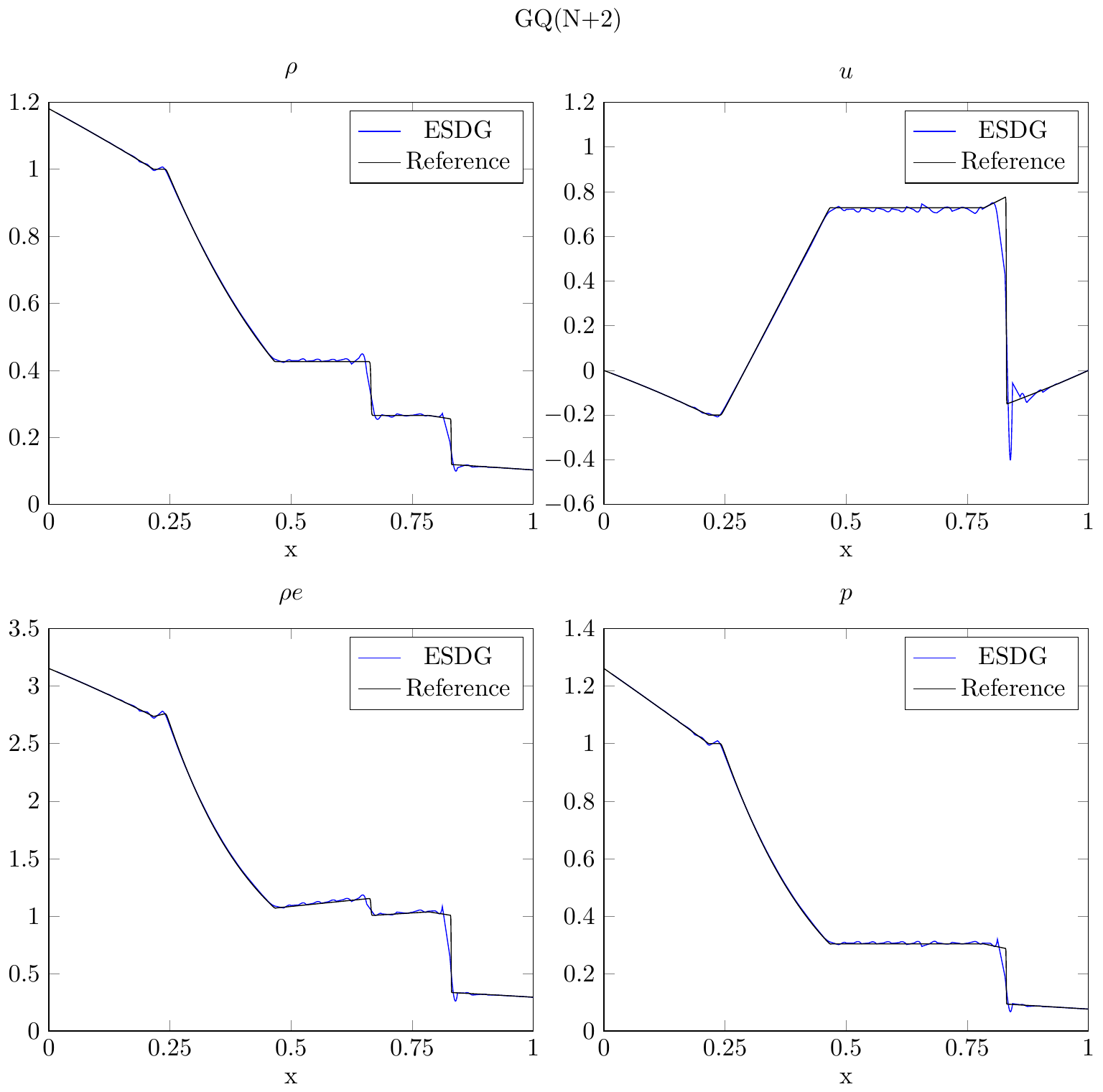}
  \caption{
    Density, velocity, total energy, and pressure at time $t=0.2$
    for the Sod shock tube under gravity. The solution is obtained on a grid
    with 32 elements with polynomial order $N=4$ using an $N+2$ point
    Gauss quadrature rule.
    The reference results were obtained using a 5$^{th}$ order WENO scheme with 2000 cells.
  \label{fig:sod_gll}}
\end{figure}
\autoref{fig:sod_lgl} and \autoref{fig:sod_gll} show the density, velocity,
total energy, and pressure fields at time $t=0.2$ for the LGL and Gauss point
simulations, respectively, along  with a reference simulation that uses a
5$^{th}$ order WENO scheme with 2000 cells.
The results suggest that for this test problem, there is no significant
improvement with more accurate Gauss quadrature.

\reva{\subsection{Rising thermal bubble}}
To verify the entropy conservation and entropy stability of the discretization
we consider the classic atmospheric benchmark of thermal convection.
Our compressible setup closely follows the anelastic experiments presented in
\citet{Smolarkiewicz1992}.
A buoyant bubble is placed in a neutrally stratified environment with
potential temperature 300~K.
The domain $(-\frac{L}{2}, \frac{L}{2}) \times (0, H)$ with $(L, H)$ = (2~km, 2~km) is periodic in the horizontal with rigid-lid boundaries
at the top and bottom.
The initial perturbation is a thermal bubble of radius 250~m with its center
260~m above the bottom and a potential temperature constant value of 0.5~K.
The domain is resolved using $K$ $\times$ $K$ elements with polynomial order $N=4$.
Here, and in the subsequent examples, only the $N+1$ point LGL quadrature rule is considered.
Since the aim of this test is to verify the entropy conservation and stability
properties of the scheme, we augment the LSRK54 scheme with the relaxation
technique from \citet{Ranocha2020}. The CFL number is 0.4.

\autoref{fig:rtb_tht} shows the potential temperature perturbation at time 1000~s computed using
the entropy stable flux on a fine mesh with $K=40$ elements.
Since the scheme has minimal dissipation through the matrix flux, the solution
is rich in small-scale structures a feature that would not be possible with
other stabilization mechanisms at this resolution.
\begin{figure}
  \centering
  \includegraphics[scale=0.4]{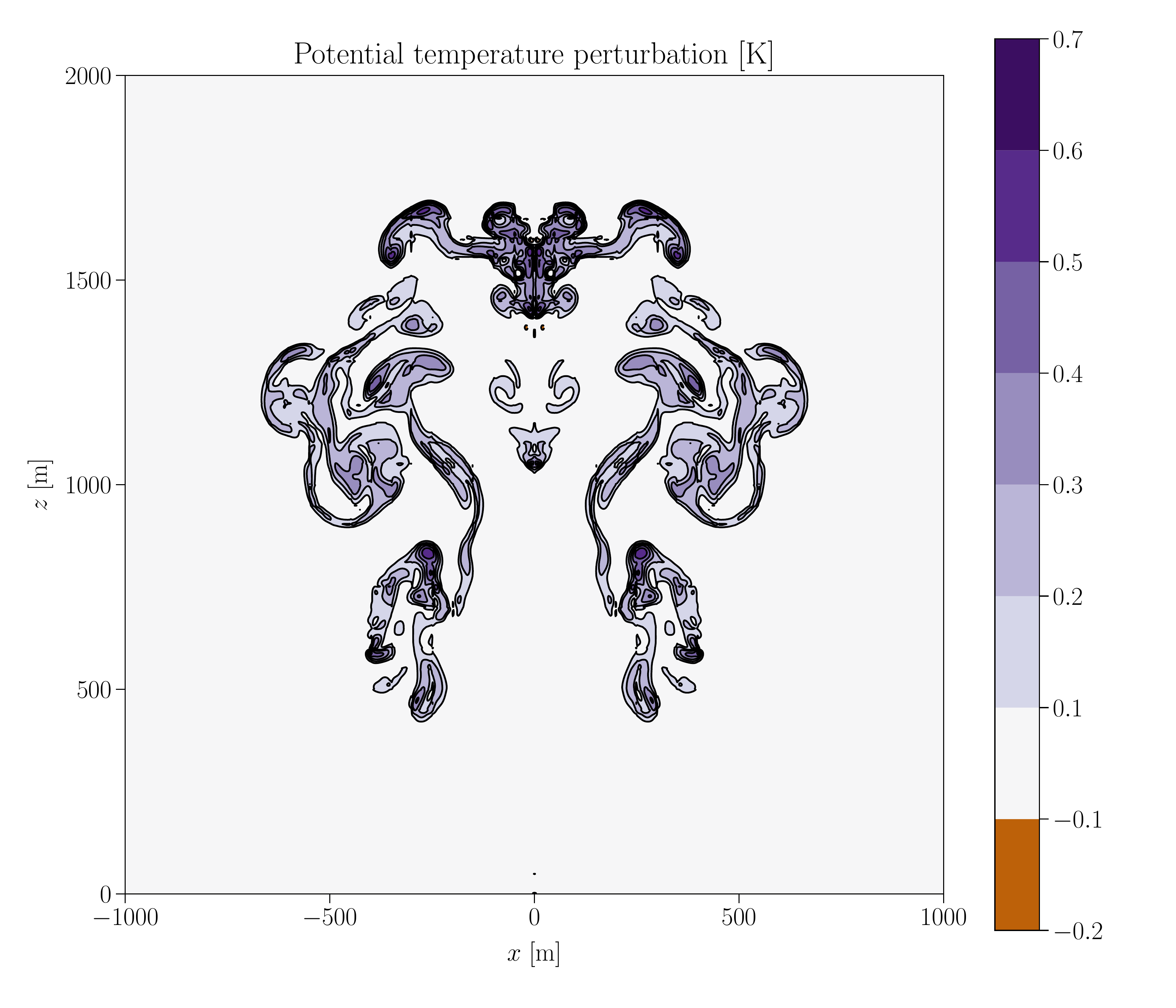}
  \caption{
    Potential temperature perturbation for the rising thermal bubble test case using
    the matrix dissipation flux at time 1000 s on a fine mesh.
  \label{fig:rtb_tht}}
\end{figure}

\autoref{fig:rtb_entropy} shows the normalized entropy change over time for the
rising thermal bubble using both the entropy conservative and entropy
dissipative surface fluxes.
To challenge the scheme, a coarser mesh with $K=10$ elements is used and the
mesh is warped by the transformation
\begin{align}
  \tilde{x}_1 &= x_1 + \frac{L}{5} \sin{\frac{\pi \left(x_1 + \frac{L}{2}\right)}{L}}
                       \sin{\frac{2 \pi x_2}{H}}, \\
  \tilde{x}_2 &= x_2 - \frac{H}{5} \sin{\frac{2\pi \left(x_1 + \frac{L}{2}\right)}{L}}
                       \sin{\frac{\pi x_2}{H}}.
\end{align}
\revb{This mesh warping is illustrated in \autoref{fig:rtb_mesh}.}
\begin{figure}
  \centering
  \includegraphics[scale=0.2]{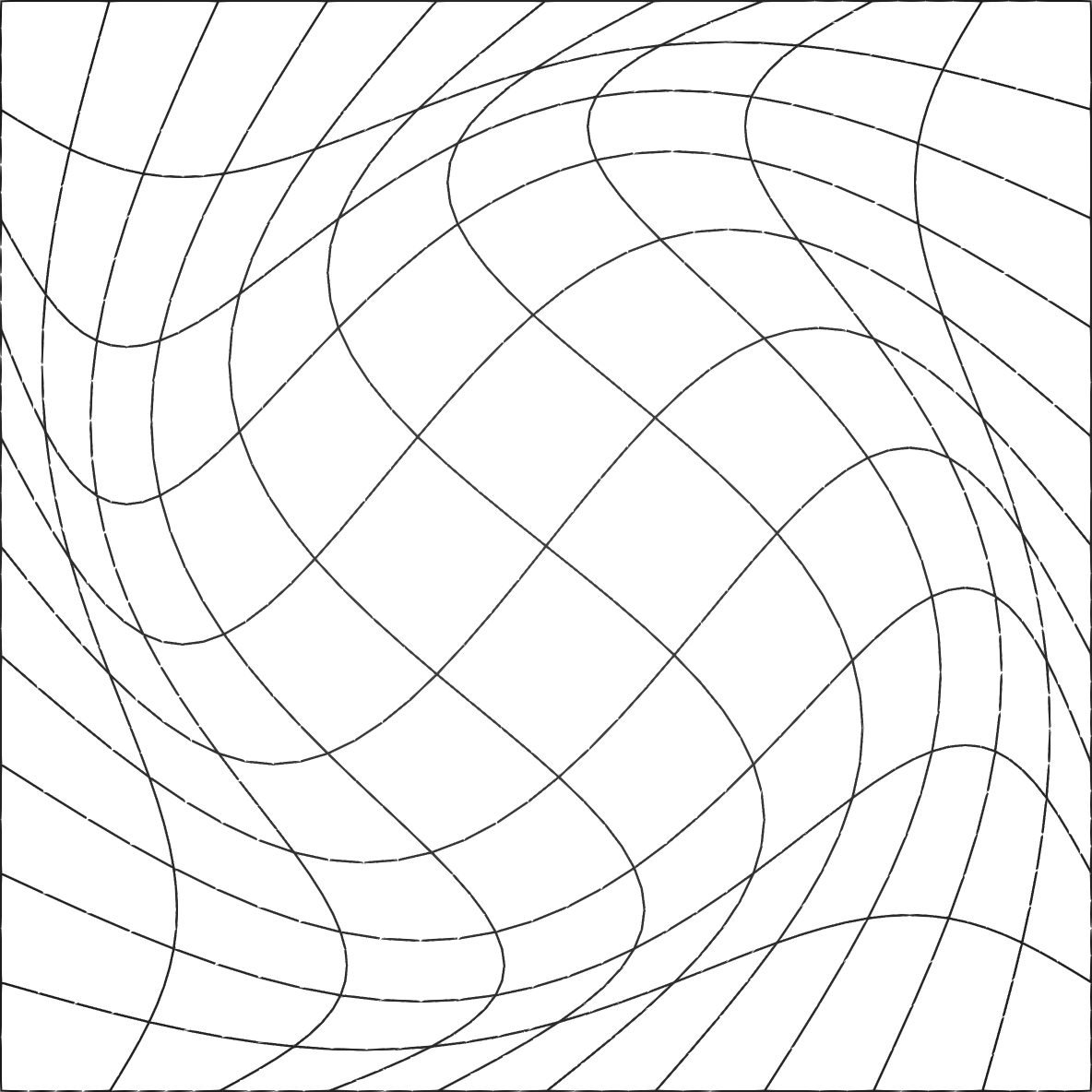}
  \caption{
    Warped curvilinear mesh used in the rising thermal bubble simulations veryfying
    entropy conservation and entropy stability.
  \label{fig:rtb_mesh}}
\end{figure}
\begin{figure}
  \centering
  \includegraphics[scale=1]{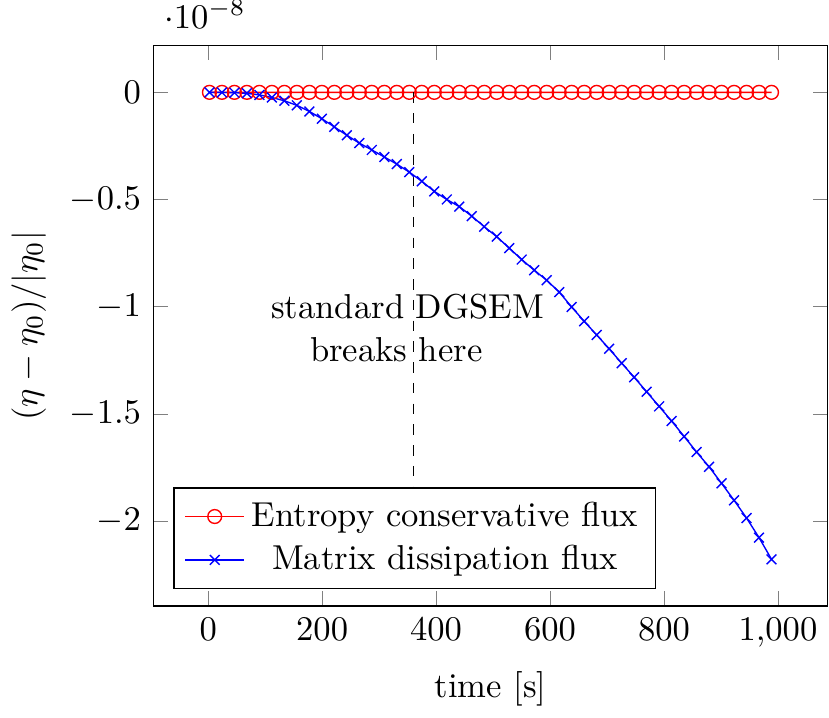}
  \caption{
    Time evolution of normalized entropy change for the rising thermal bubble simulations
    using entropy conservative and entropy stable fluxes on a coarse warped mesh.
    With the entropy conservative flux the maximum value of entropy change is less than
    $2.6 \times 10^{-15}$.
  \label{fig:rtb_entropy}}
\end{figure}
As expected, the simulation using the entropy conservative flux conserves
entropy to machine precision, while the simulation with the entropy stable flux
shows strict entropy decay. Also shown in \autoref{fig:rtb_entropy} is a line
denoting where the standard DG spectral element simulation with a Rusanov flux
and no other stabilization method crashes with a NaN error.

\reva{\subsection{Gravity wave in a channel}}
To verify the high-order convergence of the entropy-stable scheme,
a gravity wave test case from \citet{Baldauf2013} was adopted.
The two-dimensional setup specifies a channel of size $L \times H$
with $L=300$ km and $H=10$ km.
\reva{The boundary conditions are periodic in the horizontal and no-flux in the vertical.
The geopotential is $\phi = g z$ where
$g = 9.81$ m/s${^2}$ and $z$ is the vertical coordinate.}
An isothermal background state with temperature $T_0 = 250$ K
is perturbed by a warm bubble with a maximum temperature perturbation  $\Delta T$
triggering the evolution of gravity and acoustic waves.
A uniform background flow of speed 20 m/s and no Coriolis force is assumed.
An exact solution of the linearized problem is available, which can also be used
to verify the convergence of models solving the full nonlinear equations, provided
the initial perturbation is small enough \cite{Baldauf2013}.
\reva{Since the solution is obtained by complex trigonometric series expansion,
we do not repeat it here, but refer the reader to Section 2 in \citet{Baldauf2013}
for the analytical solution and to Section 3.1 for the parameters of their ``small-scale setup''.}
Experience from \citet{Blaise2016} and \citet{Baldauf2021}, which
used this test case to evaluate high-order DG solvers, shows that
for high orders of accuracy $\Delta T$ has to be very small
to avoid error saturation due to nonlinear effects.
Following \citet{Baldauf2021}, we deviate from the canonical setup in
\citet{Baldauf2013} by decreasing $\Delta T$ to $10^{-3}$~K.

The channel is discretized by a uniform grid with $\revb{K_h} \times \revb{K_v}$ elements.
In all simulations the ratio between the horizontal element size
$\Delta x = \frac{L}{\revb{K_h}}$ and the vertical element size
$\Delta z = \frac{H}{\revb{K_v}}$ is held constant $\frac{\Delta x}{\Delta z} = 3$.
Time integration is done using the LSRK54 time stepper with CFL = 0.1.

\begin{figure}
  \centering
  \includegraphics[scale=0.25]{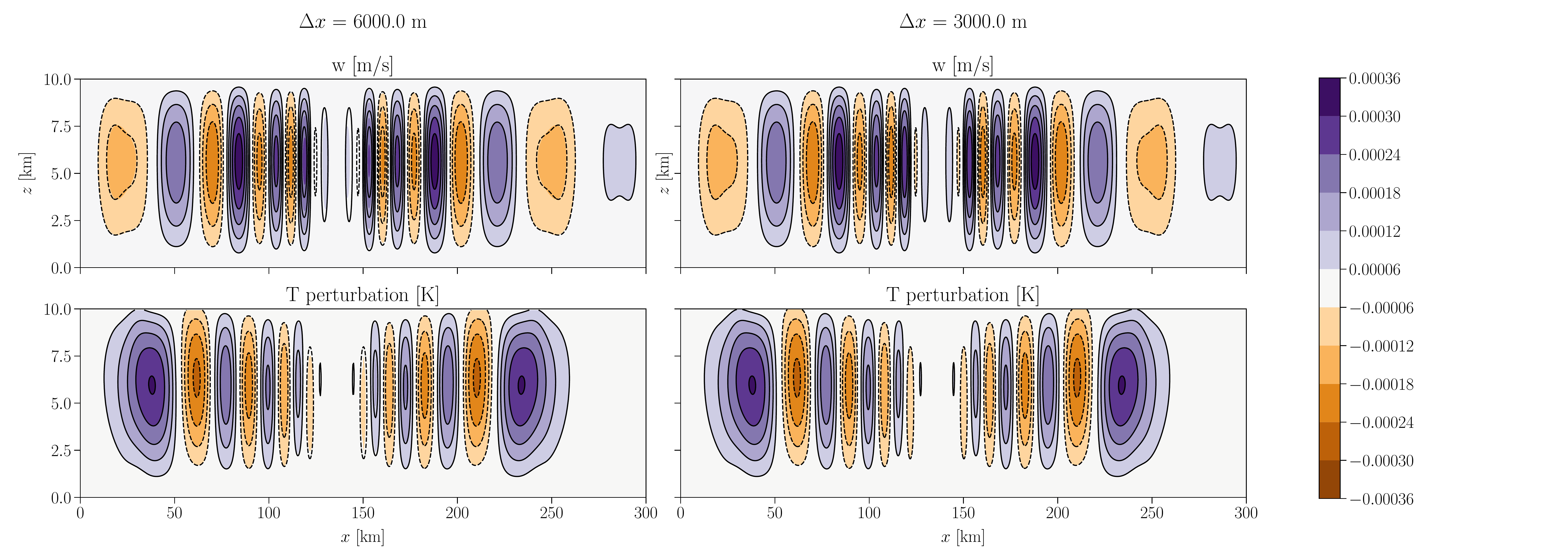}
  \caption{
    Temperature perturbation and vertical velocity for the gravity wave test case
    at time 30 min for grids with horizontal element sizes $\Delta x = 6$ km
    and $\Delta x = 3$ km.
    The contours present the analytical results whereas shading corresponds
    to the numerical solution.
    \label{fig:gw_contours}}
\end{figure}
\autoref{fig:gw_contours} presents temperature and vertical velocity perturbations
at the final simulation time 30 min using polynomial order $N=3$ for coarse and fine grids with $\Delta x = 6$ km and $\Delta x = 3$ km, respectively.
The contours present the analytical results whereas shading corresponds
to the numerical solution.
Small differences between the numerical and analytical solutions
can be observed on the coarse grid, whereas the two are nearly
indistinguishable on the fine grid.
\begin{figure}
  \centering
  \includegraphics[scale=0.8]{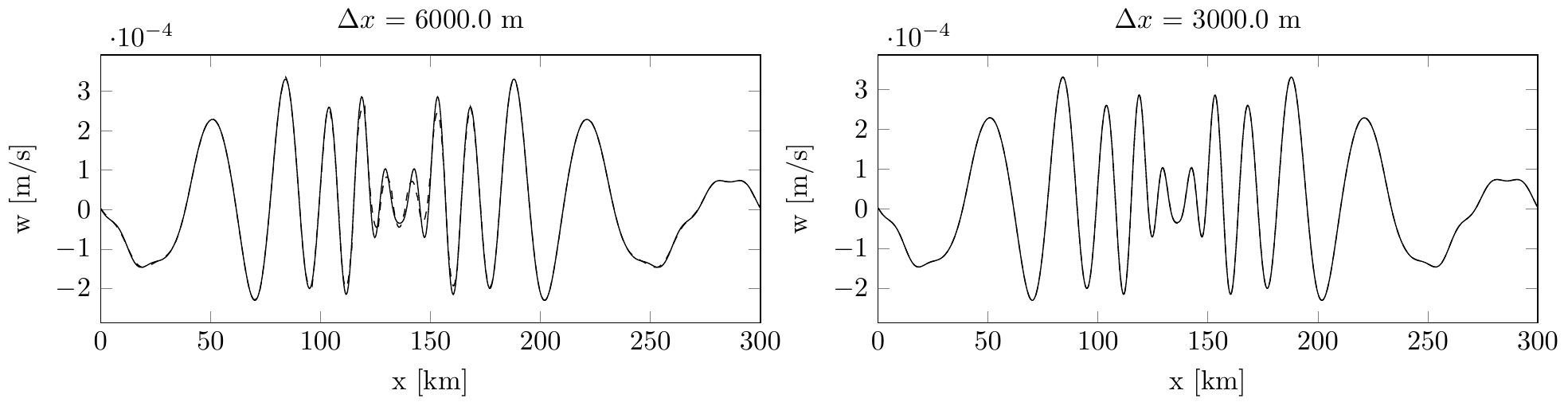}
  \caption{
    Vertical velocity at 5 km height for the gravity wave test case at time 30 min
    for grids with horizontal element sizes $\Delta x = 6$ km and $\Delta x = 3$ km.
    The dashed line shows the simulation results whereas the solid line shows the linear analytic solution.
  \label{fig:gw_line}}
\end{figure}
This is further corroborated by
 \autoref{fig:gw_line} which shows the numerical and analytical
vertical velocity perturbation at $z = 5$ km for the two grids.
The vertical velocity perturbation for $\Delta x = 3$ km
in \autoref{fig:gw_contours} can be directly compared to
Figure 2 in \citet{Baldauf2021}.
With the same polynomial order and resolution
our entropy-stable DG scheme is clearly more accurate, which may
be surprising since \citet{Baldauf2021} used more accurate Gauss
quadrature.
The main reason for this is that, as proven in \ref{app:balance}, the entropy stable scheme
is well-balanced for isothermal atmospheres.
It is well known that well-balanced schemes are better at approximating small
perturbations around balanced states~\cite{Castro2017},
which is exactly the situation in this test problem.
\begin{figure}
  \centering
  \includegraphics[scale=0.9]{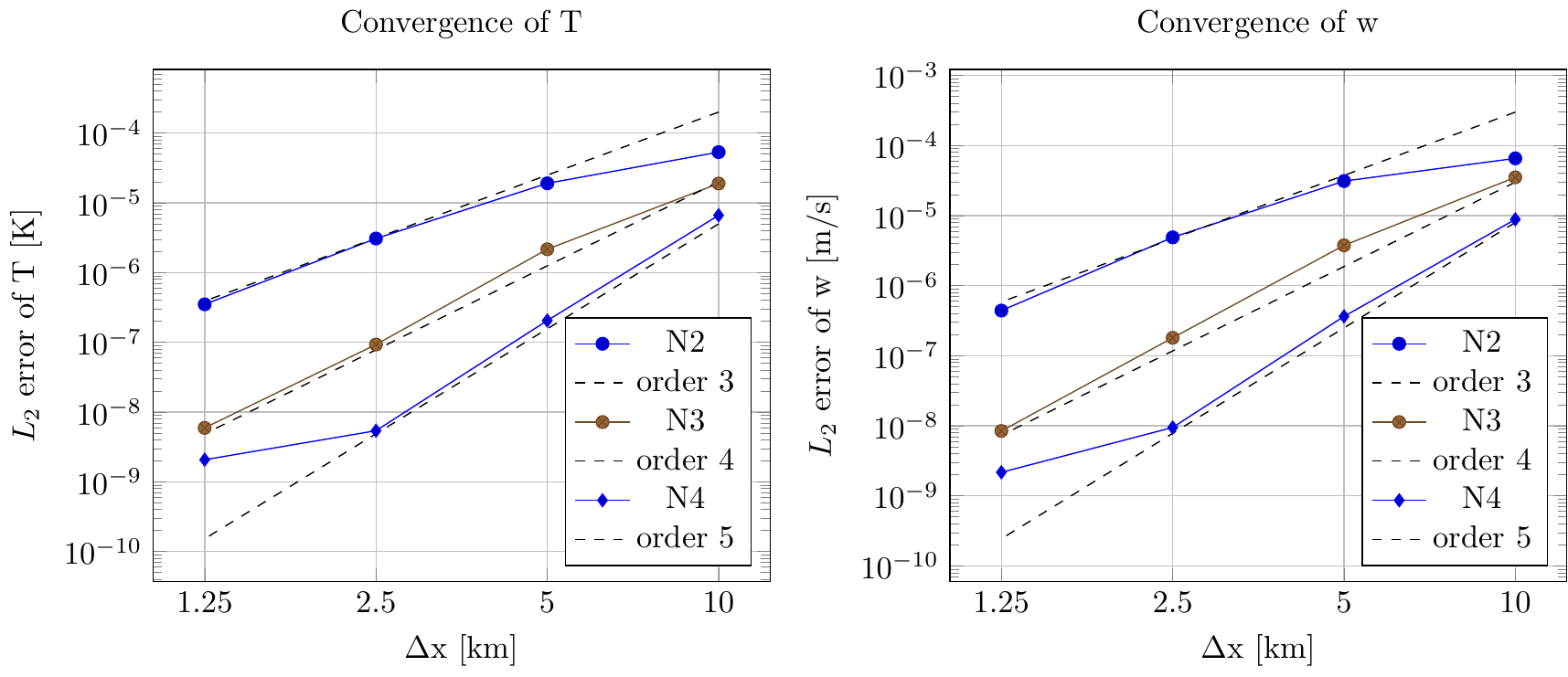}
  \caption{
    Convergence of temperature (left) and vertical velocity (right) errors under
    grid refinement for the gravity wave test case in the $L_2$ norm for
    polynomial orders 2, 3, and 4 \revb{on unwarped meshes}.
    Dashed lines indicate orders 3, 4, and 5
    convergence; the break-down of convergence when the L$^2$ is less than
    $10^{-8}$ is also seen in \citet{Baldauf2021}.\label{fig:gw_conv_l2}}
\end{figure}
\autoref{fig:gw_conv_l2} shows convergence
in the normalized $L_2$ error norm for the temperature and vertical velocity.
The $L_2$ error norm is computed as
\begin{equation}
  L_2(q) = \sqrt{\frac{\int (q - q_{exact}) ^ 2 \volm}{L H}} .
\end{equation}
Overall, we observe convergence rates close to the expected $N+1$ for approximation
with polynomial order $N$. When compared to Figure~3 in \citet{Baldauf2021} we see
much smaller errors of the entropy-stable scheme at coarse resolutions, which is explained
by its well-balanced property. Similarly as \citet{Baldauf2021}, we see
a small influence of nonlinear effects for the highest polynomial order $N=4$,
leading to suboptimal convergence near the finest resolution.
\revb{To verify high-order accuracy of our scheme on curvilinear meshes,
we repeat the convergence experiment on a sequence of meshes warped by the
transformation
\begin{align}
  \tilde{x}_1 &= x_1 + \frac{L}{20} \sin{\frac{\pi \left(x_1\right)}{L}}
                       \sin{\frac{2 \pi x_2}{H}}, \\
  \tilde{x}_2 &= x_2 - \frac{H}{20} \sin{\frac{2\pi \left(x_1\right)}{L}}
                       \sin{\frac{\pi x_2}{H}}.
\end{align}
An example of a mesh warped by this transformation is shown in \autoref{fig:gw_mesh}.
\begin{figure}
  \centering
  \includegraphics[scale=0.3]{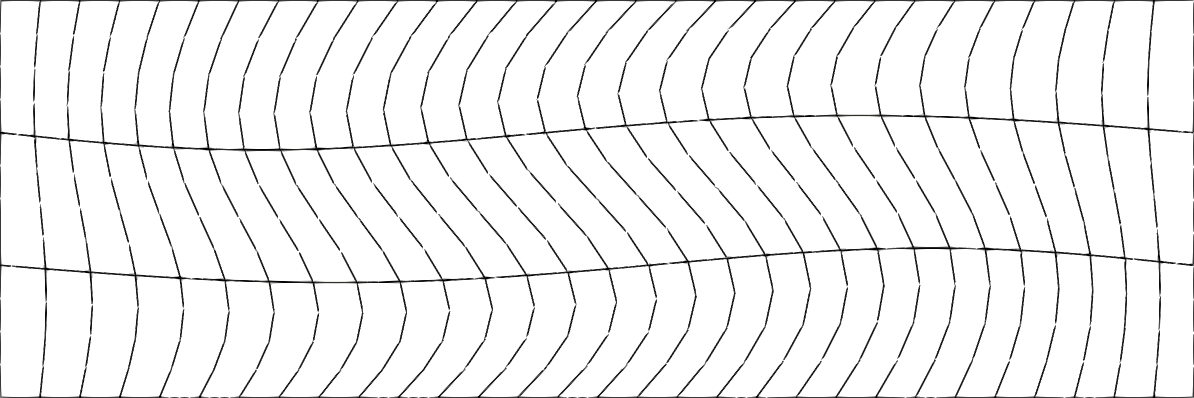}
  \caption{
    An example of warped mesh used to check convergence on curvilinear meshes 
    in the gravity wave experiment. For visualization purposes 
    the plotted mesh is enlarged by a factor of 10 in the vertical direction.
  \label{fig:gw_mesh}}
\end{figure}
\begin{figure}
  \centering
  \includegraphics[scale=0.9]{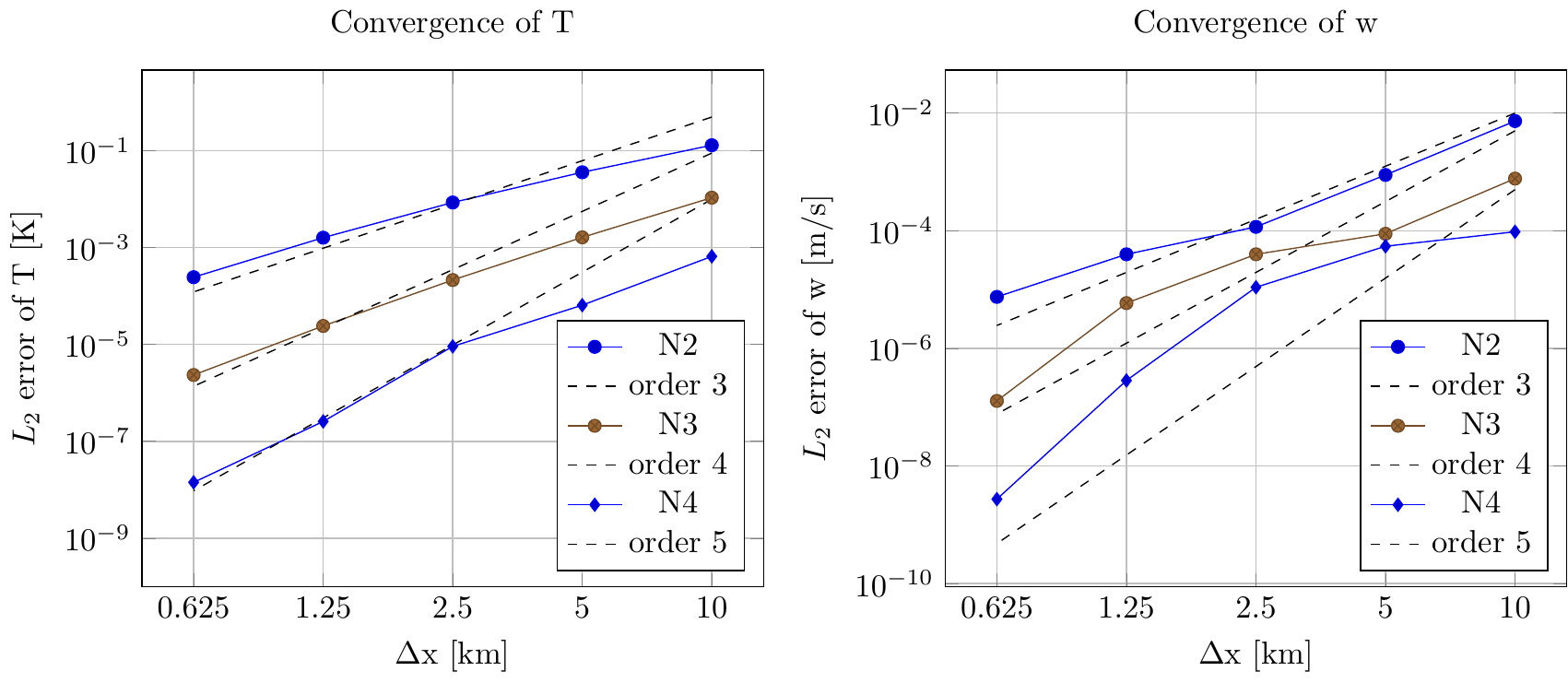}
  \caption{
    Convergence of temperature (left) and vertical velocity (right) errors under
    grid refinement for the gravity wave test case in the $L_2$ norm for
    polynomial orders 2, 3, and 4 on warped meshes. Dashed lines indicate orders 3, 4, and 5
    convergence.
    \label{fig:gw_conv_warp}}
\end{figure}
The convergence results are presented on \autoref{fig:gw_conv_warp}.
Since the scheme is not well-balanced on curved meshes, the magnitude of errors
is much larger on coarse meshes than without warping.
While some of the convergence curves are not
as close to the ideal $N+1$ convergence rate as in the unwarped case, overall 
the scheme achieves increased order of accuracy with increased polynomial order.
The convergence rate for the vertical velocity appears even higher than $N+1$
for polynomial orders 3 and 4.
}

\reva{\subsection{Baroclinic instability}}
Here, we set up a standard benchmark for atmospheric models
in a spherical configuration, used to compare dynamical cores within
the 2016 edition of DCMIP (Dynamical Core Intercomparison Project)~\cite{DCMIP2016},
following the \revo{description in} \citet{Ullrich2013}.
The test case is designed to idealize global weather evolution in the midlatitudes.
The model is initialized with a deep-atmosphere, balanced steady-state
solution that is axisymmetric about the Earth’s rotation axis,
\reva{summarized in appendix A of \citet{Ullrich2013}.}
\reva{The geopotential is $\phi = g r$ where $r$ is the radial distance from
the center of the sphere with $g=9.80616$ m/s$^2$}
and the planetary rotation vector is $\vec{\omega} = (0, 0, 2\Omega)$
with $\Omega = 7.29212 \times 10^{-5} \frac{1}{\text{s}}$ .
A localized Gaussian hill perturbation of the
zonal wind field is introduced in the northern midlatitudes, which triggers
the evolution of a baroclinic wave over the course of several
days. 
\reva{The analytical description of the triggering mechanism can be found 
in Section 6 of \citet{Ullrich2013}}.
In the initial 7-day period, the wave dynamics is linear, after which the
dynamics transitions to a nonlinear regime with a steepening and breaking of the wave.
To compare with other models, in our presentation the focus is on the first
15 days, and particularly day 8 (before the wave breaking) and day 10 (after the wave breaking).
However, to properly evaluate the robustness of the entropy-stable scheme, we confirmed
that all our simulations successfully run for over 100 days without any explicit
dissipation.
To resolve the sphere, we use an equiangular cubed-sphere mesh with $K_h$ elements per horizontal side
on each of the six panels and $K_v$ elements in the vertical.
The model top is 30 km and rigid lid boundary conditions are assumed at the top and
bottom.
We perform two simulations with polynomial orders 3 and 7. The simulation
with polynomial order 3 uses $K_h = 30$ and $K_v = 8$,
while the simulation using polynomial order 7 uses $K_h = 15$ and $K_v = 4$.
These choices ensure that the total number of degree\revo{s} of freedom is the same for both simulations,
corresponding to about 100 km average horizontal resolution and 32 vertical levels.
\revb{For time stepping we use an implicit-explicit formulation, where terms related to the vertical dynamics
were treated implicitly in a linear fashion.
Following \citet{Giraldo2013}, the splitting is achieved by subtracting a linearization of~\eqref{eq:gov:atmos}
from the full equations and then adding it back in; see (3.2) in \citet{Giraldo2013}.
The linearized equations are discretized using a standard (non entropy stable) DG
method based on the same polynomial order and quadrature rule as the entropy stable discretization.
To integrate the split equations in time, the second order additive Runga Kutta scheme
from \citet{Giraldo2013} was used with CFL = 3.0.
}
\begin{figure}
  \centering
  \includegraphics[scale=0.3, trim=250 0 250 0]{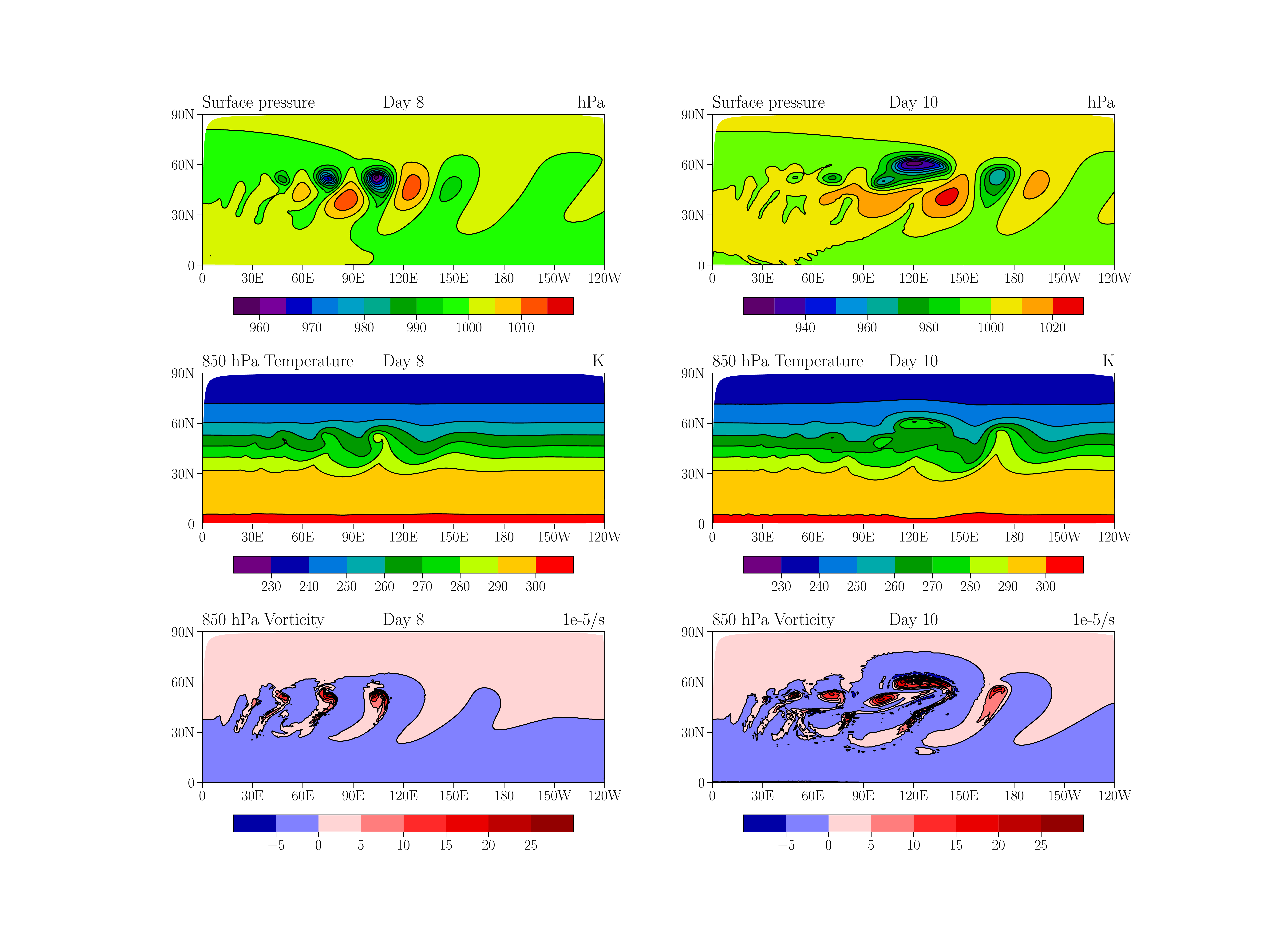}
  \caption{
    Results for the baroclinic wave benchmark using polynomial order 3
    at day 8 (left) and 10 (right).
    Top to bottom: surface pressure, temperature at 850 hPa, and relative vorticty at 850 hPa.
  \label{fig:bw_panel3}}
\end{figure}
\begin{figure}
  \centering
  \includegraphics[scale=0.3, trim=250 0 250 0]{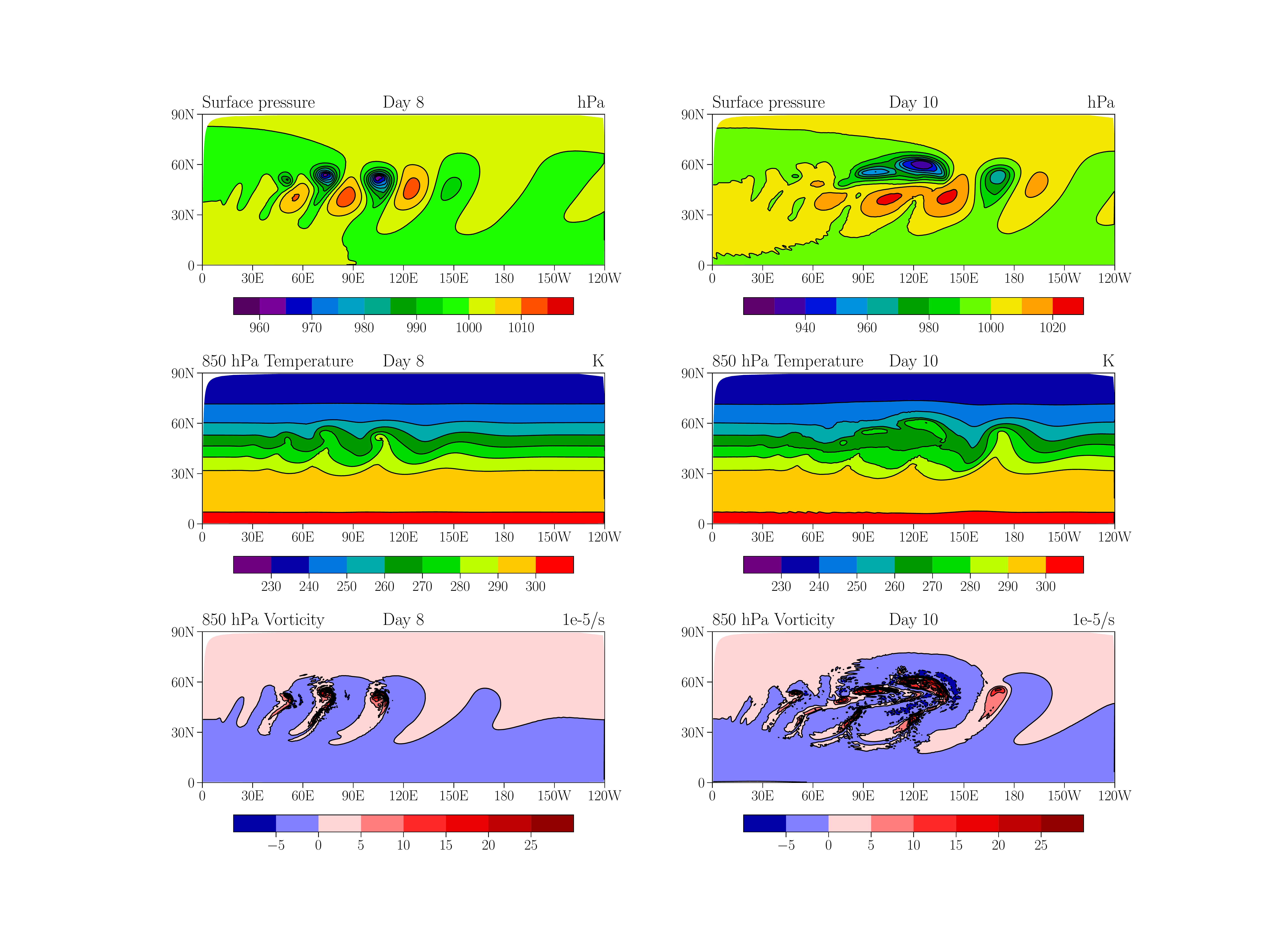}
  \caption{
    Results for the baroclinic wave benchmark using polynomial order 7
    at day 8 (left) and 10 (right).
    Top to bottom: surface pressure, temperature at 850 hPa, and relative vorticty at 850 hPa.
    \label{fig:bw_panel7}}
\end{figure}
\autoref{fig:bw_panel3} and \autoref{fig:bw_panel7} show plots of surface pressure,
850 hPa temperature and relative vorticity at days 8 and 10 for polynomial orders 3 and 7, respectively.
These results can be directly compared to results from the MCore and ENDGame
models~\cite[ Figures 4 and 5]{Ullrich2013} and
the MPAS model~\cite[ Figure 2]{Skamarock2021}. The pressure and temperature fields in their overall
structure look smooth and compare very well with other models, showing only small amounts of grid-scale
oscillations due to minimal numerical diffusion of our method.
More noise can be seen in the relative vorticity fields, especially
with polynomial order 7. This is consistent with \citet{Skamarock2021},
which found that the relative vorticity field is more sensitive to
numerical dissipation than the pressure and temperature fields.
\begin{figure}
  \centering
  \includegraphics[scale=0.9]{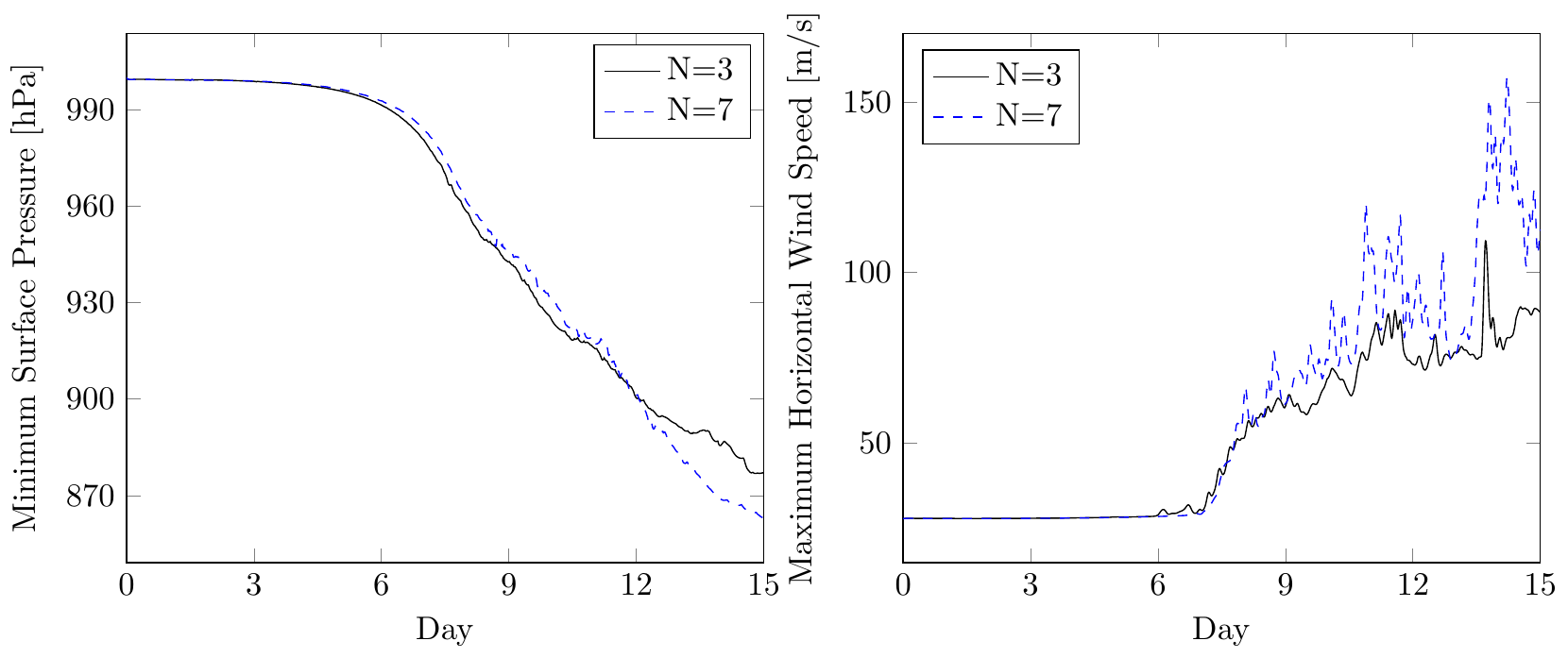}
  \caption{Time evolution of the minimum surface pressure (left) and maximum horizontal
          wind speed (right) for the baroclinic wave benchmark using polynomial
          orders 3 and 7.
    \label{fig:bw_tseries}}
\end{figure}

\autoref{fig:bw_tseries} shows the time evolution of the minimum surface
pressure and maximum horizontal velocity for both polynomial orders,
and can be compared to results from the MPAS model in \citet[ Figures 3 and
4]{Skamarock2021}.
The surface pressure evolution is close for both polynomial orders up until day 12 and compares well
to other models. The maximum horizontal speed shows consistently higher maxima
for polynomial order 7.
This further confirms that the higher order simulation introduces less numerical dissipation.
The presented results successfully demonstrate
that the new entropy-stable scheme is robust and provides high-quality solutions
using only numerical flux dissipation for very high orders of accuracy in
atmospheric simulations.
For realistic applications, e.g., climate simulations and weather prediction, it might be beneficial to add more dissipation to damp grid-scale
oscillations; an advantage of the entropy-stable scheme is that any additional regularization can be guided solely by the needs of the model physics since the dynamics is provably stable.

\section{Concluding Remarks}

We have extended the work of \citet{Renac2019} to develop a flux differencing
formulation of the DG method for non-conservative balance laws on elements that
have operators that can be skew-hybridized \cite{Chan2019}.
By specifying the volume and surface numerical fluxes so that they satisfy an
additional entropy conservation and dissipation relation, the scheme was proved to
be entropy stable.
Based on the new framework, an entropy-stable DG discretization
of the Euler equations with gravity has been constructed.
Entropy-conservative and entropy-stable numerical fluxes for the
Euler equations with gravity have been given in explicit form.
The entropy stability and high-order accuracy
of the novel scheme have been verified on standard atmospheric benchmark problems.
As the atmospheric test problems show, the scheme is quite robust even without
any explicit stabilization techniques beyond the choice of numerical fluxes.

\section*{Acknowledgements}
Waruszewski, Kozdon, Wilcox, and Giraldo were supported by the generosity of
Eric and Wendy Schmidt by recommendation of the Schmidt Futures program.
Waruszewski, Kozdon, Wilcox, Gibson, and Giraldo were supported by the
National Science Foundation under grant AGS-1835881. Giraldo was also supported
by Office of Naval Research under grant N0001419WX00721.

\revo{
\section*{Data availability statement}
The code to generate, analyze, and plot the datasets used in this study can
be found in the associated reproducibility repository
\url{https://github.com/mwarusz/paper-esdg-nonconservative-gravity}.}

\appendix

\section{Entropy stable flux with matrix dissipation}\label{app:flux}
To construct an entropy stable flux the normal entropy conservative flux
$\Dec_n = n_1 \Dec_1 + n_2 \Dec_2 + n_3 \Dec_3$ is combined with a
dissipation operator $\mat{H}_n$ acting on the jump in entropy variables,
leading to the normal entropy stable flux
\begin{equation}
  \Dec^*_n = \Dec_n - \frac{1}{2} \mat{H}_n \jmp{\vec{\beta}}.
\end{equation}
To ensure entropy dissipation and stability, the operator $\mat{H}_n$ has to be constructed carefully~\cite{Derigs2017}.
Here, motivated by \citet{Winters2017}, the dissipation operator is constructed in the so-called
``matrix'' form
\begin{equation}
  \mat{H}_n = \mat{R}_n^T | \mat{\Lambda}_n| \mat{T}_n \mat{R}_n,
\end{equation}
where $\mat{R}_n$ is the matrix of right eigenvectors of the normal flux Jacobian,
$|\mat{\Lambda}_n|$ is the corresponding
matrix of absolute eigenvalues, and $T_n$ is a scaling matrix.
These matrices are given explicitly as
\begin{equation}
  \mat{R}_{n}
  =
  \begin{bmatrix}
    1                  & 1                                     & 0                             & 0   & 1\\
    \avg{u_1} - c^* n_1& \avg{u_1}                             & m_1                        & \ell_1 & \avg{u_1} + c^* n_1\\
    \avg{u_2} - c^* n_2& \avg{u_2}                             & m_2                        & \ell_2 & \avg{u_2} + c^* n_2\\
    \avg{u_3} - c^* n_3& \avg{u_3}                             & m_3                        & \ell_3 & \avg{u_3} + c^* n_3&\\
    h^* - u_n c^*      & \frac{\overline{u^2}}{2} + \avg{\phi} & \vec{m} \cdot \avg{\vec{u}}& \vec{\ell} \cdot \avg{\vec{u}} & h^* + u_n c^*
  \end{bmatrix},
\end{equation}
\begin{equation}
  |\mat{\Lambda}|_{n} =
    \Diagonal\left(
    |u_n - c^*|,
    |u_n|,
    |u_n|,
    |u_n|,
    |u_n + c^*|
    \right),
\end{equation}
\begin{equation}
  \mat{T}_{n} =
    \Diagonal\left(
      \frac{\avg{\rho}_{\log}}{2 \gamma},
      \frac{(\gamma - 1) \avg{\rho}_{\log}}{\gamma},
      p^*,
      p^*,
      \frac{\avg{\rho}_{\log}}{2 \gamma}
      \right),
\end{equation}
where
\begin{align}
  u_n &= \vec{n} \cdot \avg{\vec{u}}, \\
  \overline{u^2} &= 2 {\|\avg{\vec{u}}\|}_{2}^{2} - \avg{{\|\vec{u}\|}_{2}^{2}}, \\
  p^* &= \frac{\avg{\rho}}{2 \avg{b}}, \\
  c^* &= \sqrt{\frac{p^*}{\avg{\rho}_{\log}}}, \\
  h^* &= \frac{\gamma}{2 \avg{b}_{\log} (\gamma - 1)} + \overline{\frac{u^2}{2}} + \avg{\phi}, \\
\end{align}
and $\vec{n}$, $\vec{m}$, $\vec{\ell}$ are orthonormal vectors.
The action of the dissipation
operator on the jump in entropy variables can be written without having
to specify non-uniquely defined tangent vectors $\vec{m}$ and $\vec{\ell}$ as
\begin{align}
  \left(\mat{H}_n \jmp{\vec{\beta}}\right)_\rho &=
    w_1 + w_2 + w_3, \\
  \begin{split}
  \left(\mat{H}_n \jmp{\vec{\beta}}\right)_{\vec{\rho u}} &=
    \left(w_1 (\avg{\vec{u}} - c^* \vec{n})  +
          w_2 \avg{\vec{u}} +
          w_3 (\avg{\vec{u}} + c^* \vec{n}) \right. \\
          &\qquad \left. +|u_n| p^* \mat{T} \left(\jmp{\vec{\beta}}_{\vec{\rho u}} -
                                              \avg{\vec{u}} \jmp{\vec{\beta}}_{\rho e} \right) \right),
  \end{split} \\
  \begin{split}
  \left(\mat{H}_n \jmp{\vec{\beta}}\right)_{\rho e} &=
  \left(w_1 (h^* - c^* u_n)  +
        w_2 \left(\overline{\frac{u^2}{2}} + \avg{\phi} \right) +
        w_3 (h^* + c^* u_n) \right. \nonumber \\
        &\qquad \left. + |u_n| p^* \left(\mat{T} \avg{\vec{u}} \cdot \jmp{\vec{\beta}}_{\vec{\rho u}} -
                        \| \mat{T} \avg{\vec{u}} \|_2^2 \jmp{\vec{\beta}}_{\rho e}
                  \right)
              \right),
  \end{split}
\end{align}
where
\begin{align}
  \mat{T} &= \mat{I} - \vec{n} \otimes \vec{n}, \\
  w_1 &= |u_n - c^*| \frac{\avg{\rho}_{\log}}{2 \gamma} \left(\jmp{\vec{\beta}}_\rho +
                    (\jmp{\vec{u}} - c^* \vec{n}) \cdot \jmp{\vec{\beta}}_{\vec{\rho u}}
                    + (h^* - c^* u_n) \jmp{\vec{\beta}}_{\rho e}\right), \\
  w_2 &= |u_n| \frac{(\gamma - 1) \avg{\rho}_{\log}}{\gamma}  \left(\jmp{\vec{\beta}}_\rho +
                    \avg{\vec{u}} \cdot \jmp{\vec{\beta}}_{\vec{\rho u}}
                    + \left(\overline{\frac{u^2}{2}}
                    + \avg{\phi} \right) \jmp{\vec{\beta}}_{\rho e}\right), \\
  w_3 &= |u_n + c^*| \frac{\avg{\rho}_{\log}}{2 \gamma}  \left(\jmp{\vec{\beta}}_\rho +
                    (\jmp{\vec{u}} + c^* \vec{n}) \cdot \jmp{\vec{\beta}}_{\vec{\rho u}}
                    + (h^* + c^* u_n) \jmp{\vec{\beta}}_{\rho e}\right),
\end{align}
and $\jmp{\vec{\beta}}_{\rho}$, $\jmp{\vec{\beta}}_{\vec{\rho u}}$,
$\jmp{\vec{\beta}}_{\rho e}$ denotes $\jmp{\vec{\beta}}_1$,
$(\jmp{\vec{\beta}}_2, \jmp{\vec{\beta}}_3,\jmp{\vec{\beta}}_4)$, $\jmp{\vec{\beta}}_5$,
respectively.

\section{Well-balanced property for isothermal atmosphere}\label{app:balance}
Here we prove that the new entropy-stable scheme is well-balanced for an
isothermal atmosphere for an unwarped ``box'' domain.
That is we assume that the domain is $\Omega$ is such that all the elements are
aligned with $\xi_{3}$ being the direction that gravity acts and $\xi_{1}$ and
$\xi_{2}$ are orthogonal. This is possible on, for instance, a perfect spherical
shell domain with gravity acting radially or an unwarped hexahedral domain with
gravity acting downward. In both cases we let $r$ be the direction that gravity
acts so that the geopotential is $\phi = g r$ where $g$ is a constant.
In each element we assume that $r \equiv r(\xi_{3})$ so that $\phi$ is also only
a function of $\xi_{3}$.
To demonstrate hydrostatic balance we show that the entropy conservative
flux~\eqref{eq:num:ec_flux} is zero when the initial conditions are set to the
analytic hydrostatic state for an isothermal atmosphere.

A hydrostatically balanced state is a state which is independent of time with a
zero velocity and depends only on $r$. After assuming a stationary solution
which only depends on $r$ with $u_{k} = 0$, the atmospheric
equations~\eqref{eq:gov:atmos} reduce to the hydrostatic balance equation
\begin{equation}
  \pder{p}{r} = -\rho g.
\end{equation}
Under the isothermal assumption of the ideal gas law $p = \rho R_d T_0$ with $R_{d}$
and $T_{0}$ being the gas constant and constant reference temperature,
respectively, we then have an exponential solution for pressure $p$ and density
$\rho$
\begin{subequations}
  \label{eq:ap:balanced}
  \begin{align}
    p &= p_0 \exp{\left(-\frac{g r}{R_d T_0}\right)}, \label{eq:ap1:p} \\
    \rho &= \frac{p_0}{R_d T_0} \exp{\left(-\frac{g r}{R_d T_0}\right)}, \label{eq:ap1:rho}
  \end{align}
\end{subequations}
where $p_0$ is the pressure at $r = 0$.

Since the solution only varies in $r$ and thus is constant with respect to
$\xi_{1}$ and $\xi_{2}$ in each element, consistency of a numerical flux in
fluctuation form~\eqref{eq:num:fluctuation} implies that $\Dec_{1}$ and
$\Dec_{2}$ are zero.
The third component of the entropy conservative numerical
flux~\eqref{eq:num:ec_flux} simplifies to
\begin{equation}
  \label{eq:ap1:dec3}
  \Dec_{3}\left(\vec{q}^{-}, \pnt{x}^{-}, \vec{q}^{+}, \pnt{x}^{+}\right)
  =
  \begin{bmatrix}
    0\\
    0\\
    0\\
    \left( p^{*} + \frac{1}{2} \hat{\rho}\jmp{\phi} \right)\\
    0
  \end{bmatrix}
  -
  \begin{bmatrix}
    0\\
    0\\
    0\\
    p^{-}\\
    0
  \end{bmatrix}.
\end{equation}
Since in the balance solution~\eqref{eq:ap:balanced} pressure is proportional to
density the inverse temperature is constant, $b = \rho / 2p = 1 / R_d T_0$,
and the expressions for the auxiliary variables~\eqref{eq:pstar}
and~\eqref{eq:rhohat} simplify to
\begin{subequations}
  \begin{align}
    p^{*} &= \frac{R_d T_0}{2} \avg{\rho}, \label{eq:ap1:pstar}\\
    \hat{\rho} &= \avg{\rho}_{\log}. \label{eq:ap1:rhohat}
  \end{align}
\end{subequations}
Evaluating the density logarithmic average with the balanced density
solution~\eqref{eq:ap1:rho} gives
\begin{equation}
  \label{eq:ap1:rholog}
  \hat{\rho}
  =
  \frac{\rho^+ - \rho^-}{\log{\rho^+} - \log{\rho^-}} =
  -R_d T_0 \frac{\rho^+ - \rho^-}{g r^+ - g r^-} =
  -R_d T_0 \frac{\jmp{\rho}}{\jmp{\phi}}.
\end{equation}
Using the auxiliary expressions~\eqref{eq:ap1:pstar} and~\eqref{eq:ap1:rholog}
in the non-zero term of the numerical flux~\eqref{eq:ap1:dec3} leads to
\begin{equation}
  \left( p^{*} + \frac{1}{2} \hat{\rho}\jmp{\phi} \right) - p^- =
  \frac{R_d T_0}{2} \avg{\rho} - \frac{R_d T_0}{2} \jmp{\rho} - p^- =
  R_d T_0 \rho^- - p^- = p^- - p^- = 0.
\end{equation}
This implies that $\Dec_{3}$ is zero and thus the solution is well-balanced.

\bibliographystyle{spmpscinat}
\bibliography{bibliography/blesdg, bibliography/Giraldo_refs}

\end{document}